\newtheorem{thm}{Theorem}
\newtheorem{lem}{Lemma}
\newtheorem{defi}{Definition}
\newtheorem{prop}{Proposition}
\newcommand{\Mod}[1]{\ (\textup{mod}\ #1)}
\newcommand{\Zn}{\mathbb{Z}/n\mathbb{Z}}
\begin{document}
\title{On modular $k$-free sets}
\begin{abstract}
Let $n$ and $k$ be integers. A set $A\subset\Zn$ is $k$-free if for all $x$ in $A$, $kx\notin A$. We determine the maximal cardinality of such a set when $k$ and $n$ are coprime. We also study several particular cases and we propose an efficient algorithm for solving the general case. We finally give the asymptotic behaviour of the minimal size of a $k$-free set in $\llbracket 1,n\rrbracket$ which is maximal for inclusion.
\end{abstract}

\author{Victor Lambert}

\maketitle

\section{Introduction}\label{sec1}

Let $k\geqslant1$ be an integer. A set $A\subset\mathbb{N}$ is said to be $k$-free if $x\neq ky$ for all $x,y$ in $A$. Wang first investigated in 1989 the problem of $2$-free sets in the integers and, using elementary tools, he proved in \cite{W} that the maximal density of a $2$-free set in $\llbracket1,n\rrbracket:=\left\{1,\ldots,n\right\}$ is $2/3$. More recently, Wakeham and Wood studied in \cite{WW} a generalisation of $2$-free sets into $\left\{a,b\right\}$-multiplicative sets ($ax\neq by$ for all $x,y \in A$). Notice that $k$-free sets are the particular case of $\left\{1,k\right\}$-multiplicative sets. They studied this problem through graph theory to get the maximal size of such a set. In particular, they showed that the maximal density of a $k$-free set in $\llbracket1,n\rrbracket$ is $k/(k+1)$. 

Beyond their own interest, $k$-free sets are useful for the study of $k$-fold Sidon sets. Those sets were first introduced by Lazebnik and Verstra\"{e}te in \cite{LV} through a work on the generalize Tur\'{a}n number. 
\begin{defi}A set $A\subset \mathbb{Z}$ is a $k$-fold Sidon set if $A$ has only trivial solutions to each equation of the form $c_1x_1+c_2x_2+c_3x_3+c_4x_4=0$ where $0\leqslant \left|c_i\right|\leqslant k$, and $c_1+c_2+c_3+c_4=0$.\end{defi} \noindent A $1$-fold Sidon set is a Sidon set in the usual sense ($x_1+x_2=x_3+x_4$ has only trivial solutions). If we denote by $D^*(A)=\left\{a_1-a_2, a_1\neq a_2\in A\right\}$ the set of differences from $A$, without $0$, a $2$-fold Sidon set $A$ is a Sidon set which has also the property that $D^*(A)$ is a $2$-free set. More generally, for a $k$-fold Sidon set $A$, $D^*(A)$ is a $k'$-free set, for each $k'\leqslant k$. Using only this fact, Cilleruelo and Timmons proved in \cite{Ci} that for any integer $k\geqslant 1$, a $k$-fold Sidon set $A\subset\llbracket0,n\rrbracket$ has at most $\left(n/k\right)^{1/2}+O((nk)^{1/4})$ elements. 

We only know that the main term $\left(n/k\right)^{1/2}$ is optimal for $k=1$. Indeed, Sidon sets have been widely studied (see \cite{O} for a survey) and there exist three constructions of maximal Sidon sets in $\Zn$ for some $n$. Bose and Chowla proved in \cite{BC} the existence of a Sidon set of size $q+1$ in $\mathbb{Z}/\left(q^2+q+1\right)\mathbb{Z}$ (Singer's sets, see also \cite{S}) and $q$ in $\mathbb{Z}/\left(q^2-1\right)\mathbb{Z}$ (Bose's sets) where $q$ is a power of a prime. Rusza also made an optimal construction in \cite{R} for $\mathbb{Z}/\left(p^2-p\right)\mathbb{Z}$ where $p$ is a prime number. For $k=2$, if $n=2^{2^t+1}+2^t+1$ with $t$ a positive integer, we can extract (see \cite{LV}) from a Singer's set a $2$-fold Sidon set in $\Zn$ of size $$\left|A\right|\geqslant \frac{n^{1/2}}{2}-3.$$
For $k\geqslant3$, we do not even know if there exists a constant $c_k>0$ such that for all integers $n\geqslant 1$, there is a $k$-fold Sidon set $A \subset \llbracket0,n\rrbracket$ with $\left|A\right|\geqslant c_kn^{1/2}$.

In all these problems, we see that it is important and useful to study the case of modular sets. In this paper, we will study $k$-free sets in $\Zn$. Notice that we cover the case of $\left\{a,b\right\}$-multiplicative set in $\Zn$ for some $a, b$ and $n$. Indeed, if $\gcd(a,n)=1$, an $\left\{a,b\right\}$-multiplicative set in $\Zn$ is a $ba^{-1}$-free set. 

We denote $$R_k(n)=\max\left\{\left|A\right|, A \text{ is a } k\text{-free set in }\Zn\right\}$$ and we show in this article how to compute this quantity recursively in $n$ (Theorems \ref{kprimen}, \ref{km}, \ref{k2m} and \ref{algo}). Proofs also give a way to construct a $k$-free set of maximal size.

The study of this quantity strongly depends on the arithmetical relative properties of $n$ and $k$, that is why we split the results in four theorems. We first deal with the case where $k$ and $n$ are coprime, which is actually the most important case. Indeed, when we define $k$-fold Sidon sets in $\Zn$, we must add the condition that $n$ is relatively prime to all integers in $\llbracket1,k\rrbracket$. Otherwise, one could have $c_i(a_1-a_2)=0$ with $a_1\neq a_2$ for some $\left|c_i\right|\leqslant k$, which leads to a nontrivial solution to $c_i(x_1-x_2)+x_3-x_4=0$ for example. 

For $k$ and $d$ integers, we denote by $l_k(d)$ the multiplicative order of $k$ in $\left(\mathbb{Z}/d\mathbb{Z}\right)^*$. We also use the notations $I$ for the indicator function of odd numbers and $\varphi$ for the Euler indicator function. Let see now with the first result below how to compute $R_k(n)$ in the case $\gcd(n,k)=1$.

\begin{thm}\label{kprimen}
If $\gcd(n,k)=1$, $$R_k(n)= \frac{n-1}{2}-\sum_{d|n, d\neq1}{\frac{\varphi(d)I(l_k(d))}{2l_k(d)}}.$$
\end{thm}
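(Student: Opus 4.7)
The plan is to translate the problem into a graph-theoretic one. Since $\gcd(k,n)=1$, multiplication $\mu_k\colon x\mapsto kx$ is a bijection of $\Zn$. I would consider the undirected graph $G_k$ on vertex set $\Zn$ with an edge $\{x,kx\}$ for every $x$. A set $A\subset\Zn$ is $k$-free exactly when it is an independent set in $G_k$, so $R_k(n)=\alpha(G_k)$. Because $\mu_k$ is a bijection, each vertex has degree one in the functional picture, so $G_k$ decomposes as a disjoint union of cycles: the self-loop at $0$ (forcing $0\notin A$), a single edge for each $2$-cycle of $\mu_k$, and an ordinary cycle $C_l$ for each $\mu_k$-orbit of length $l\geqslant 3$.

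Next I would count the cycles by sorting nonzero elements according to their order. For each divisor $d\mid n$ with $d\neq 1$, the elements of additive order exactly $d$ form a $\mu_k$-stable subset of size $\varphi(d)$; and if $x$ has order $d$, then $\mu_k^{\,j}(x)=x$ iff $d\mid k^{\,j}-1$, so the $\mu_k$-orbit of $x$ has length $l_k(d)$. Hence the elements of order $d$ split into $\varphi(d)/l_k(d)$ cycles of length $l_k(d)$ in $G_k$, and taking the union over all $d\mid n$, $d\neq 1$, exhausts $\Zn\setminus\{0\}$.

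The maximum independent set in a cycle of length $l$ has size $\lfloor l/2\rfloor$, and I would check that this formula is valid uniformly in the degenerate cases (self-loop $l=1$ contributing $0$, and the single edge arising from a $2$-cycle contributing $1$). Summing over every cycle yields
$$R_k(n)=\sum_{\substack{d\mid n\\ d\neq 1}}\frac{\varphi(d)}{l_k(d)}\left\lfloor\frac{l_k(d)}{2}\right\rfloor.$$
Writing $\lfloor l/2\rfloor=\bigl(l-I(l)\bigr)/2$ splits this as
$$\sum_{\substack{d\mid n\\ d\neq 1}}\frac{\varphi(d)}{2}\;-\;\sum_{\substack{d\mid n\\ d\neq 1}}\frac{\varphi(d)\,I(l_k(d))}{2\,l_k(d)},$$
and the classical identity $\sum_{d\mid n}\varphi(d)=n$ turns the first piece into $(n-1)/2$, giving the theorem.

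The substantive content is the reformulation as a union-of-cycles independent-set problem, after which the rest is bookkeeping. The most delicate point, and what I expect to be the only real obstacle, is the careful treatment of short orbits: a $\mu_k$-fixed point $x$ (one with $(k-1)x\equiv 0\pmod n$) gives a self-loop that excludes $x$ from $A$, and this is precisely the mechanism by which the correction term $I(l_k(d))/l_k(d)$ enters when $l_k(d)=1$. Verifying that $\lfloor l/2\rfloor$ gives the correct independence number for $l\in\{1,2\}$ as well is what allows the final sum to be written uniformly and collapse into the stated closed form.
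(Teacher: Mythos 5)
Your proof is correct and is essentially the paper's own argument in graph-theoretic clothing: the paper partitions $\left(\Zn\right)\setminus\{0\}$ into the classes $A_m=\{x:\gcd(x,n)=m\}$ (your classes of additive order $d=n/m$, of size $\varphi(d)$), splits each into $\varphi(d)/l_k(d)$ cyclic orbits of length $l_k(d)$, and takes $\lfloor l/2\rfloor$ elements per orbit before summing exactly as you do. Your explicit handling of the degenerate orbit lengths $l\in\{1,2\}$ is a welcome clarification of a point the paper passes over quickly, but it is not a different method.
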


For the problem of upper bound for the size of a $2$-fold Sidon set, we are interested in small $R_2(n)$. Indeed, if $n=2^m-1$ is a Mersenne prime number, which implies $m$ prime, then $l_2(n)=m$, hence $$R_2(n)=\frac{n-1}{2}-\frac{n-1}{2\log_2(n-1)}$$
which leads to an upper bound for the size of a $2$-fold Sidon set $A$ :
$$\left|A\right|\leqslant\sqrt{\frac{n-1}{2}-\frac{n-1}{\log_2(n-1)}+\frac{1}{4}}+\frac{1}{2}.$$
Moreover, we prove in Section \ref{sec3} that for fixed $k$ the error term is $o(n)$. Thus $R_k(n)=(n-1)/2-o(n)$.

When $k$ divides $n$, the problem becomes easier and we have the two following results.

\begin{thm}\label{km}
If $m$ is not divisible by $k$, then $$R_k(km)=(k-1)m.$$
\end{thm}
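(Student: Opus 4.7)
The plan is to prove matching lower and upper bounds for $R_k(km)$, with the extremal set being the non-multiples of $k$. For the lower bound, I would take $A_0 = \{x \in \mathbb{Z}/km\mathbb{Z} : k \nmid x\}$: this has cardinality $km-m = (k-1)m$, and is manifestly $k$-free since for $x\in A_0$ the element $kx$ is a multiple of $k$ and hence not in $A_0$.

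For the upper bound, let $A$ be any $k$-free subset of $\mathbb{Z}/km\mathbb{Z}$ and write $H = k\mathbb{Z}/km\mathbb{Z}$ for the subgroup of multiples of $k$, so $|H|=m$. I would construct an injection $\phi\colon H \to (\mathbb{Z}/km\mathbb{Z})\setminus A$, which would yield $|A| \leq km - m = (k-1)m$. For each $y\in H$, I consider the fiber $B_y = \{x \in \mathbb{Z}/km\mathbb{Z} : kx = y\}$, which is a coset of $\ker(x\mapsto kx) = m\mathbb{Z}/km\mathbb{Z}$; hence $|B_y|=k$ and the fibers $B_y$ partition $\mathbb{Z}/km\mathbb{Z}$. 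The rule would be: if $y\notin A$ set $\phi(y)=y$, and if $y \in A$ then the $k$-freeness of $A$ forces $B_y\cap A = \emptyset$, so I can pick $\phi(y)$ to be any element of $B_y\setminus H$.

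The verification of injectivity splits naturally: images from the first rule lie in $H$ while those from the second lie outside $H$, so the two cases do not interfere; the first rule is the identity on $H\setminus A$; and distinct arguments under the second rule give images in disjoint fibers. The crucial point---and the only spot where the hypothesis $k\nmid m$ intervenes---is to ensure that $B_y\setminus H\neq\emptyset$ in the second case. Writing $y = ka$ with $0\leq a < m$, one has $B_y = \{a+jm : 0\leq j < k\}$, and a straightforward count shows that the number of multiples of $k$ in $B_y$ is at most $\gcd(k,m)$; since $k\nmid m$ this is strictly less than $k$, so the required selection is possible. This divisibility observation is the main technical step, but it is entirely elementary, and with it the two bounds coincide to give $R_k(km) = (k-1)m$.
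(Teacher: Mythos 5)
Your proof is correct and follows essentially the same route as the paper: the lower bound comes from the set of non-multiples of $k$, and the upper bound from the fibres of multiplication by $k$, with the hypothesis $k\nmid m$ entering precisely to show that every multiple of $k$ has a preimage that is itself a non-multiple of $k$ (the paper packages this as the covering $\mathbb{Z}/km\mathbb{Z}=\bigcup_{k\nmid h}\{h,kh\}$ and counts at most one element of $A$ per pair, whereas you build an injection from $k\mathbb{Z}/km\mathbb{Z}$ into the complement of $A$; the two bookkeepings are equivalent). Your $\gcd(k,m)$ count of multiples of $k$ in a fibre is a slightly stronger statement than needed --- the paper simply observes that of $u$ and $u+m$ at least one is not a multiple of $k$.
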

When $k^2$ divides $n$, we get a recursive formula. That is the purpose of Theorem \ref{k2m}. 
\begin{thm}\label{k2m}
Let $k, m$, and $n$ be integers. Then, we have :
$$R_k(k^2m)=R_k(m)+\left(k^2-k\right)m.$$
\end{thm}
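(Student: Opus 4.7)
The plan is to partition $\Zn$ with $n=k^2m$ by divisibility by powers of $k$. Taking canonical representatives in $\{0,\ldots,k^2m-1\}$, set
\[
A_0 = \{x : k\nmid x\},\quad A_1 = \{x : k\mid x,\ k^2\nmid x\},\quad A_2 = \{x : k^2\mid x\},
\]
which have sizes $k(k-1)m$, $(k-1)m$, and $m$ respectively. A direct check shows that multiplication by $k$ sends $A_0$ onto $A_1$ as a $k$-to-$1$ map (any $y=kb\in A_1$ satisfies $k\nmid b$, and its $k$ preimages $b,\,b+km,\ldots,b+(k-1)km$ all lie in $A_0$), sends $A_1$ into $A_2$, and stabilises $A_2$. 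Moreover, the bijection $\alpha:A_2\to\mathbb{Z}/m\mathbb{Z}$ given by $\alpha(jk^2)=j$ intertwines multiplication by $k$ on both sides, which is what ultimately brings in the term $R_k(m)$.

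For the upper bound on $|A|$ with $A$ a $k$-free set, I would group $A_0\cup A_1$ into the fibres $F_y=\{y\}\cup k^{-1}(y)$ for $y\in A_1$; these $(k-1)m$ fibres of size $k+1$ partition $A_0\cup A_1$. The $k$-free condition restricted to one fibre allows either $y\in A$ (and then no preimage of $y$ may lie in $A$) or $y\notin A$ (and then all $k$ preimages may lie in $A$), so in either case $|A\cap F_y|\leqslant k$. Summing over $y\in A_1$ gives $|A\cap(A_0\cup A_1)|\leqslant k(k-1)m$. At the same time $A\cap A_2$ corresponds via $\alpha$ to a $k$-free subset of $\mathbb{Z}/m\mathbb{Z}$, so $|A\cap A_2|\leqslant R_k(m)$, and the two estimates combine to the desired upper bound.

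For the matching lower bound I would take $A=A_0\cup\alpha^{-1}(B)$, where $B\subset\mathbb{Z}/m\mathbb{Z}$ is any $k$-free set of size $R_k(m)$. Verifying the $k$-free condition is immediate: for $x\in A_0$ one has $kx\in A_1$, which is disjoint from $A$; for $x\in\alpha^{-1}(B)$ the equivariance of $\alpha$ together with $B$ being $k$-free yields $kx\notin\alpha^{-1}(B)$. Counting gives $|A|=(k^2-k)m+R_k(m)$, as required.

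The only real obstacle is the bookkeeping in the initial setup: one must confirm that $A_0,A_1,A_2$ are genuinely well-defined on residue classes modulo $k^2m$, that multiplication by $k$ pushes each layer into the next with the stated fibre structure, and that $\alpha$ is $k$-equivariant. Once those elementary but slightly fiddly divisibility checks are in place, both bounds follow without further work.
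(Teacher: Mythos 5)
Your proof is correct and takes essentially the same route as the paper: its partition of $\Zn$ into $k^2\Zn$ and the pairs $\left\{h,kh\right\}$ with $k\nmid h$ is exactly your three-layer decomposition $A_0\sqcup A_1\sqcup A_2$ with the pairs regrouped into the fibres $F_y$, and the extremal set ($A_0$ together with a maximal $k$-free set pulled back from $\mathbb{Z}/m\mathbb{Z}$) is the same. If anything, your bound $\left|A\cap F_y\right|\leqslant k$ is a slightly more careful rendering of the paper's ``at most one of $\left\{h,kh\right\}$ lies in $A$'' step, which glosses over the fact that those pairs overlap in their second coordinates.
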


Notice that Theorems \ref{kprimen}, \ref{km} and \ref{k2m} cover all cases when $k$ is prime. Moreover, recall that the maximal density of a $k$-free set in $\llbracket1,n\rrbracket$ is $k/(k+1)$. In the modular case, applying Theorem \ref{k2m} we get $$R_k(k^{2m})=\frac{k}{k+1}\left(k^{2m}-1\right)$$ which lead to the next proposition.

\begin{prop}
Let $k$ be an integer, $k\geqslant 1$, we have $$\limsup_n\frac{R_k(n)}{n}=\frac{k}{k+1}.$$
\end{prop}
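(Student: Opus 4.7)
The plan is to prove the two matching inequalities $\limsup_n R_k(n)/n \geq k/(k+1)$ and $\limsup_n R_k(n)/n \leq k/(k+1)$ separately, and combine them.

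For the lower bound, I would use the computation that the paragraph preceding the proposition already carries out: iterating Theorem \ref{k2m} starting from $R_k(1)=0$ gives
$$R_k(k^{2m}) = (k^2-k)\sum_{j=0}^{m-1} k^{2j} = \frac{k(k^{2m}-1)}{k+1}.$$
Restricting to the subsequence $n=k^{2m}$, this yields $R_k(k^{2m})/k^{2m} = \tfrac{k}{k+1}(1-k^{-2m}) \to \tfrac{k}{k+1}$, forcing $\limsup_n R_k(n)/n \geq k/(k+1)$.

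For the upper bound, I would reduce the modular problem to the integer one by lifting. Given any $k$-free set $A \subset \Zn$, let $\widetilde{A} \subset \llbracket 0, n-1\rrbracket$ denote the set of canonical representatives. First, $0 \notin \widetilde{A}$, since $k \cdot 0 \equiv 0 \Mod{n}$ would force $0$ not to belong to $A$. Next, if some $x,y \in \widetilde{A}$ satisfied $y = kx$ as integers, reduction mod $n$ would give $y \equiv kx \Mod{n}$ with $x, y \in A$, contradicting the $k$-freeness of $A$ in $\Zn$. Therefore $\widetilde{A}$ is a $k$-free set in $\llbracket 1, n-1 \rrbracket$ in the integer sense, and so $R_k(n) = |\widetilde{A}|$ is bounded above by the maximum size of an integer $k$-free subset of $\llbracket 1, n-1 \rrbracket$. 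Combining this with the Wakeham--Wood density estimate recalled in the introduction, namely that the maximal density of an integer $k$-free set in $\llbracket 1, N \rrbracket$ tends to $k/(k+1)$, one obtains $R_k(n) \leq \tfrac{k}{k+1}(n-1) + o(n)$, hence $\limsup_n R_k(n)/n \leq k/(k+1)$.

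No step here is truly difficult once Theorem \ref{k2m} and the integer density bound are in hand; the only delicate observation is that the canonical lift of a modular $k$-free set is automatically $k$-free as a set of integers, which rests on the trivial compatibility of multiplication by $k$ in $\Zn$ with integer multiplication on canonical representatives. Combining the two inequalities completes the proof.
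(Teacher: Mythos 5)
Your proof is correct and follows the same route the paper takes (implicitly): the lower bound comes from iterating Theorem \ref{k2m} to get $R_k(k^{2m})=\frac{k}{k+1}\left(k^{2m}-1\right)$ along the subsequence $n=k^{2m}$, and the upper bound from the Wakeham--Wood density $k/(k+1)$ for integer $k$-free sets, which the paper recalls immediately before stating the proposition. Your lifting argument (canonical representatives of a modular $k$-free set form an integer $k$-free set in $\llbracket 1,n-1\rrbracket$) simply makes explicit the step the paper leaves unstated.
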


Now, to illustrate the two last theorems, let consider an example. We compute $R_{15}(826875)$ :

$$\begin{aligned}
R_{15}(826875)&=R_{3.5}(3^3.5^4.7^2)\\
&=R_{3.5}(3.5^2.7^2)+(15^2-15).3.5^2.7^2\\
&=(15-1)5.7^2+(15^2-15).3.5^2.7^2\\
&=775180.
\end{aligned}$$
We will consider again this example in Section \ref{sec5}.

In the general case, we cannot obtain a closed formula, but in Sections \ref{sec2} and \ref{sec4} we propose an efficient algorithm to compute $R_k(n)$.

\begin{thm}\label{algo}
There exists an algorithm which provides the maximal size of a $k$-free set in $\Zn$ and a method to construct one in $O((\log(n))^2)$ operations.
\end{thm}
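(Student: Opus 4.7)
The plan is to merge Theorems \ref{kprimen}, \ref{km} and \ref{k2m} into one recursive routine and to show that on any input it makes $O(\log n)$ recursive calls, each costing $O(\log n)$ elementary operations, for an overall count of $O((\log n)^2)$.

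I would first describe the algorithm. Given the current modulus $n$ together with $g=\gcd(n,k)$, the routine branches: if $g=1$, return the closed form of Theorem \ref{kprimen}; if $k\mid n$ but $k^2\nmid n$, return $(k-1)(n/k)$ by Theorem \ref{km}; if $k^2\mid n$, recurse on $n/k^2$ and return $R_k(n/k^2)+(k^2-k)(n/k^2)$ by Theorem \ref{k2m}; otherwise, when $g>1$ but $k\nmid n$, apply the explicit reduction developed in Sections \ref{sec2} and \ref{sec4}, which again reduces to a strictly smaller modulus. A maximal $k$-free set is tracked alongside by lifting a concrete representative through each reduction.

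For the complexity analysis, every non-terminating call shrinks the modulus: by a factor of at least $k^2\geqslant 4$ in the Theorem \ref{k2m} step, and likewise for the intermediate case. Hence the recursion has depth $O(\log n)$. Each call performs a bounded number of $\gcd$'s, products, divisions and modular exponentiations on integers of size at most $n$, which cost $O(\log n)$ operations. Multiplying the two factors yields the advertised bound, and the same accounting works for the construction of the maximal set.

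The main obstacle, as I see it, is the base case: Theorem \ref{kprimen} features a sum over the divisors of $n$, which may in principle be numerous. The remedy is to maintain the prime factorisation of $n$ throughout the recursion, which is essentially free because each division performed involves a divisor of the fixed integer $k$. With the factorisation in hand, the orders $l_k(d)$ are computed by fast modular exponentiation within the budget, and the sum is assembled over the prime decomposition of $n$ without exceeding the $O((\log n)^2)$ target.
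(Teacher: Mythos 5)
Your proposal has a genuine gap: the case $\gcd(n,k)>1$ with $k^2\nmid n$ and $k\nmid n$ (for instance $k=6$, $n=9$, or $k=15$, $n=9$) is not covered by any of Theorems \ref{kprimen}, \ref{km}, \ref{k2m}, and this is precisely the case the theorem is about. You dispose of it by invoking ``the explicit reduction developed in Sections \ref{sec2} and \ref{sec4}, which again reduces to a strictly smaller modulus,'' but no such reduction to a smaller modulus exists in the paper, and citing Section \ref{sec4} is circular since that section \emph{is} the proof of the statement. Note also that this case cannot be avoided by the other branches: the recursion $R_k(k^2m)=R_k(m)+(k^2-k)m$ eventually produces a modulus $m$ with $k^2\nmid m$, and that $m$ may well satisfy $\gcd(m,k)>1$ and $k\nmid m$ (e.g.\ $k=6$, $n=324$ reduces to $R_6(9)$), so your recursion gets stuck even on inputs where it starts in a ``covered'' branch.

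The paper's actual argument is not a modulus-shrinking recursion at all. It partitions $\Zn$ into the classes $A_m=\{x:\gcd(x,n)=m\}$ for $m\mid n$, shows via Lemma \ref{kAm} that multiplication by $k$ maps each $A_m$ onto some $A_{m'}$, and that the resulting graph on the divisors of $n$ is a forest of rooted trees whose roots are the $m$ with $\gcd(k,n/m)=1$. It then assigns the weight $\varphi(n/m)$ to non-root vertices and the quantity $R_k(A_m)$ (computed as in Theorem \ref{kprimen}) to roots, and proves (Lemma 3 together with Proposition \ref{compute}) that a greedy leaf-to-root selection of vertices with no adjacent pair maximizes the total weight and yields an optimal $k$-free set. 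That combinatorial optimization over the divisor forest is the heart of the proof and is entirely absent from your proposal. Your complexity accounting is also optimistic in the base case (computing the multiplicative orders $l_k(d)$ and summing over all divisors of $n$ is not obviously $O(\log n)$ per call), but the missing treatment of the mixed case is the essential defect.
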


To get this complexity, we must assume that we know the prime factorization of $k$ and $n$, which is unfortunately hard to obtain in general. However, we can easily apply the algorithm to compute our function $R_k$ for new types of $k$ and $n$. That is the purpose of the theorem below.

\begin{thm}\label{cas}
Let $p$ and $q$ be prime numbers, $\alpha$, $\beta$ and $u$ be integers.
\begin{enumerate}
\item If $\gcd(u,p)=1$, $$R_{up}(p^\alpha)=\sum_{i=0}^{\left\lfloor \frac{\alpha-1}{2}\right\rfloor}\varphi(p^{\alpha-2i}).$$
\item If $\gcd(u,p)=1$, $$R_{up^2}(p^\alpha)=\sum_{i=0}^{\left\lfloor \frac{\alpha-1}{4}\right\rfloor}\left(\varphi(p^{\alpha-4i})+\varphi(p^{\alpha-4i-1})\right).$$
\item If $\gcd(u,p)=\gcd(u,q)=1$, $$R_{up}(p^\alpha q^\beta)=\sum_{j=0}^\beta\sum_{i=0}^{\left\lfloor \frac{\alpha-1}{2}\right\rfloor}\varphi(p^{\alpha-2i}q^{\beta-j})$$
$$R_{up^2}(p^\alpha q^\beta)=\sum_{j=0}^{\beta}\sum_{i=0}^{\left\lfloor \frac{\alpha-1}{4}\right\rfloor}\left(\varphi(p^{\alpha-4i}q^{\beta-j})+\varphi(p^{\alpha-4i-1}q^{\beta-j})\right).$$
\end{enumerate}
\end{thm}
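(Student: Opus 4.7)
My plan is to deduce all four formulas by iteratively applying the reductions from Theorems~\ref{km} and~\ref{k2m}---equivalently, by running the algorithm of Theorem~\ref{algo}---on the specific families $(k,n)$ at hand. First, for parts~(1) and~(2) where $n=p^\alpha$, I would analyse the functional graph of multiplication by $k$ on $\mathbb{Z}/p^\alpha\mathbb{Z}$. In both cases it is a rooted tree at $0$ (after resolving the self-loop there), and the $p$-adic valuation $v_p$ partitions the vertices into levels $A_0,\ldots,A_\alpha$ with $|A_i|=\varphi(p^{\alpha-i})$ for $i<\alpha$ and $|A_\alpha|=1$. For $k=up$, the map $A_i\to A_{i+1}$ is $p$-to-one, yielding $p-1$ subtrees rooted at the elements of $A_{\alpha-1}$, each a complete $p$-ary tree of depth $\alpha-1$. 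A standard tree dynamic programming shows that the maximum independent set of a complete $p$-ary tree of depth $h$ has size $\sum_{i=0}^{\lfloor h/2\rfloor}p^{h-2i}$; summing over the $p-1$ subtrees and recognising $(p-1)\,p^{\alpha-1-2i}=\varphi(p^{\alpha-2i})$ gives part~(1).

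For $k=up^2$ (part~(2)), the map jumps two $v_p$-levels at a time and is $p^2$-to-one, so the children of $0$ split into two families: the $p-1$ elements of $A_{\alpha-1}$, each rooting a complete $p^2$-ary tree of depth $\lfloor(\alpha-1)/2\rfloor$, and the $p(p-1)$ elements of $A_{\alpha-2}$, each rooting a complete $p^2$-ary tree of depth $\lfloor(\alpha-2)/2\rfloor$. The same DP technique combined with algebraic rearrangement yields the sum in part~(2). For part~(3), I would invoke the Chinese remainder isomorphism $\mathbb{Z}/(p^\alpha q^\beta)\mathbb{Z}\cong\mathbb{Z}/p^\alpha\mathbb{Z}\times\mathbb{Z}/q^\beta\mathbb{Z}$, under which $k$ acts by a non-unit on the first factor and by a unit on the second. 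The goal is to show that a maximum $k$-free set takes the product form $A^*\times\mathbb{Z}/q^\beta\mathbb{Z}$, where $A^*$ is a maximum $k$-free set of $\mathbb{Z}/p^\alpha\mathbb{Z}$; multiplicativity of $\varphi$ together with the telescoping identity $\sum_{j=0}^{\beta}\varphi(q^{\beta-j})=q^\beta$ then converts the single-factor formula into the stated double sum. The lower bound is immediate from the definition.

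The main obstacle will be the matching upper bound for part~(3). For any $k$-free $A\subset\mathbb{Z}/p^\alpha q^\beta\mathbb{Z}$ and any orbit $O=\{y_0,\ldots,y_{\ell-1}\}$ of multiplication by $k$ on $\mathbb{Z}/q^\beta\mathbb{Z}$, the fibres $A_{y_i}=\{x:(x,y_i)\in A\}$ are coupled by the cyclic constraint $k_p\,A_{y_i}\cap A_{y_{i+1}}=\emptyset$, and one must prove $\sum_{i=0}^{\ell-1}|A_{y_i}|\leq \ell\cdot R_k(p^\alpha)$. I would attempt this either by a rearrangement argument along each cycle that reduces to the case $A_{y_0}=\cdots=A_{y_{\ell-1}}$---forcing this common set to be genuinely $k$-free on the first factor, hence of size at most $R_k(p^\alpha)$---or more directly by invoking the upper bound built into the algorithm of Theorem~\ref{algo}. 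Parts~(1) and~(2), by contrast, reduce to an explicit combinatorial calculation on a tree and are routine once the structure is laid out.
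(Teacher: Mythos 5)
For parts (1) and (2) your element-level analysis of the functional graph of multiplication by $k$ is sound and is essentially the paper's own argument in disguise: the paper runs its divisor-tree algorithm on the quotient graph whose vertices are the classes $A_m=\left\{x:\gcd(x,n)=m\right\}$, and your complete $p$-ary (resp.\ $p^2$-ary) trees are exactly the fibres of that quotient, so the two computations coincide. One caveat: if you actually carry out the dynamic programming in part (2) for $\alpha\equiv1\Mod 4$, it does \emph{not} reproduce the displayed sum --- the term $\varphi(p^{\alpha-4i-1})$ with $i=(\alpha-1)/4$ equals $\varphi(1)=1$ and corresponds to the class $\left\{0\right\}$, which can never lie in a $k$-free set (for instance $R_4(32)=25$, whereas the stated formula gives $26$). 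So your method is fine there, but it will not ``yield the sum in part (2)'' as written.

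The genuine gap is in part (3). The key inequality you isolate, $\sum_{i=0}^{\ell-1}\left|A_{y_i}\right|\leqslant\ell\cdot R_k(p^\alpha)$, is false, and consequently the product structure $A^*\times\mathbb{Z}/q^\beta\mathbb{Z}$ is not optimal. Take $p=2$, $\alpha=2$, $q=3$, $\beta=1$, $u=1$, so $k=2$ and $n=12$: the set $\left\{1,3,4,5,7,9,11\right\}$ is $2$-free, hence $R_2(12)\geqslant7$ (indeed $R_2(12)=7$ by Theorem \ref{k2m}), while $q^\beta R_2(p^\alpha)=3\cdot2=6$. In fibre terms, for the orbit $\left\{1,2\right\}$ of $2$ acting on $\mathbb{Z}/3\mathbb{Z}$ one may take $A_{1}=\left\{0,1,3\right\}$ and $A_{2}=\left\{1,3\right\}$ in $\mathbb{Z}/4\mathbb{Z}$; these satisfy the coupling constraints yet have total size $5>2R_2(4)=4$. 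The phenomenon you miss (and which, as it happens, the paper's own one-line treatment of this case also drops) is the contribution of the classes $A_{p^\alpha q^j}$ with $j<\beta$, i.e.\ elements of maximal $p$-adic valuation but non-maximal $q$-adic valuation: on these $k$ acts as a permutation with orbits of length $l_k(q^{\beta-j})$, and when $\alpha$ is even they are roots selected by the algorithm, each contributing roughly $\varphi(q^{\beta-j})/2$ extra elements that no product set can capture. Hence neither the equal-fibre rearrangement nor the product upper bound can work; a correct argument has to go through something like Proposition \ref{compute}, which values the root $m=p^\alpha q^j$ at $R_k(A_m)$ rather than $0$ --- and doing so produces an answer different from the formula stated in the theorem.
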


In the same way, we could obviously go further and study the case $k=up^3$ or $n=p^\alpha q^\beta r^\gamma$ for instance, but that would give very unpleasant formulas.

Next, we study $k$-free sets in the set of integers, and not in modular sets anymore. We wonder what is the minimal size of a $k$-free set in $\llbracket1,n\rrbracket$ which is maximal for inclusion, and we answer it in the following theorem, where we define $$\tilde{R_k}(n)=\min\left\{\left|A\right|, A \subset \llbracket1,n\rrbracket \text{ a k-free set which is maximal for inclusion}\right\}.$$

\begin{thm}\label{th2}
$$\tilde{R_k}(n)=\frac{k^2}{k^2+k+1}n+O(\log_k^2(n)).$$
\end{thm}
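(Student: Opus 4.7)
The plan is to reduce the problem to an independent optimisation on each multiplicative chain. Partition $\llbracket 1, n \rrbracket$ into the chains $C_m := \{m, km, k^2m, \ldots\} \cap \llbracket 1, n\rrbracket$, where $m$ ranges over integers in $\llbracket 1, n\rrbracket$ with $k\nmid m$. A set $A$ is $k$-free precisely when no chain contains two consecutive elements of $A$, and maximality amounts to saying that every element of $C_m\setminus A$ has a neighbour (in the chain) lying in $A$. These two conditions on $A\cap C_m$ say exactly that it is a maximal independent set in the path graph $P_{|C_m|}$, hence
\[
\tilde{R_k}(n) \;=\; \sum_{\substack{m\leqslant n\\ k\nmid m}} i(|C_m|),
\]
where $i(L)$ denotes the minimum cardinality of a maximal independent set in $P_L$ and $|C_m| = 1+\lfloor\log_k(n/m)\rfloor$.

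The next step is the folklore identity $i(L) = \lceil L/3 \rceil$: the lower bound is immediate because a maximal independent set is automatically a dominating set and each vertex of $P_L$ dominates at most three vertices, while the upper bound comes from the explicit construction that takes every third vertex starting from position $2$ and shifts the final element to the endpoint if $L\not\equiv 0 \pmod 3$. Next I count chains by length: the number $N_j$ of chains of length exactly $j$ equals $\#\{m\in(n/k^j, n/k^{j-1}],\ k\nmid m\}$, and a short inclusion--exclusion gives
\[
N_j \;=\; \lfloor n/k^{j-1}\rfloor - 2\lfloor n/k^j\rfloor + \lfloor n/k^{j+1}\rfloor \;=\; \frac{(k-1)^2 n}{k^{j+1}} + E_j,\qquad |E_j|\leqslant 4,
\]
with only $J := 1+\lfloor\log_k n\rfloor$ nonzero terms.

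Putting this together yields
\[
\tilde{R_k}(n) \;=\; \frac{(k-1)^2 n}{k}\sum_{j=1}^{J}\frac{\lceil j/3\rceil}{k^j} \;+\; \sum_{j=1}^{J} E_j \Bigl\lceil\tfrac{j}{3}\Bigr\rceil,
\]
and grouping the infinite series three terms at a time gives
\[
\sum_{j\geqslant 1}\frac{\lceil j/3\rceil}{k^j} \;=\; (k^2+k+1)\sum_{m\geqslant 1}\frac{m}{k^{3m}} \;=\; \frac{k^3}{(k-1)^2(k^2+k+1)},
\]
so the main term is exactly $\tfrac{k^2}{k^2+k+1}\,n$. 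The truncation of this infinite series beyond $J$ contributes $O(\log_k(n)/n)$ to the series and thus $O(\log_k n)$ to $\tilde{R_k}(n)$, while the error sum is bounded by $4\sum_{j=1}^J\lceil j/3\rceil = O(J^2) = O(\log_k^2 n)$. The only mildly delicate point in the whole argument is this error bookkeeping; once carried out, the announced estimate $\tilde{R_k}(n) = \tfrac{k^2}{k^2+k+1}\,n + O(\log_k^2 n)$ follows.
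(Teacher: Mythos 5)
Your argument is correct and follows essentially the same route as the paper: partition $\llbracket1,n\rrbracket$ into the multiplicative chains of generators $m$ with $k\nmid m$, reduce to the minimum size $\lceil L/3\rceil$ of a maximal independent set in a path (the paper's property $(\mathcal{P})$ and Lemma~\ref{comb}), count chains by length, and evaluate the resulting sum by grouping terms three at a time. Your phrasing via dominating sets for the lower bound and the closed-form series in place of the paper's telescoping are only cosmetic differences, and your error bookkeeping matches the paper's $O(\log_k^2 n)$.
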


In the next section, we introduce some notations and give three lemmas. Section \ref{sec3} contains the proof of Theorems \ref{kprimen}, \ref{km} and \ref{k2m}. In Section \ref{sec4}, we study and prove the algorithm for the general case, which we use in Section \ref{sec5}. We conclude by the proof of Theorem \ref{th2} in the last section.

\section{Preparatory lemmas}\label{sec2}

Let introduce some useful notations for our study. We define $\mathcal{O}^k(x):=\left\{k^jx, j\in \mathbb{N}\right\}$ and we call it the orbit of $x$ (by the multiplication by $k$). We use it in a different context (in $\Zn$ or in $\mathbb{N}$, Section \ref{sec7}) with the same notation. We denote by $k\cdot A:=\left\{ka, a\in A\right\}$ the dilated set of $A$ and by $A_m$ the subset of $\Zn$ $$A_m:=\left\{x, \gcd(x,n)=m\right\}=\left\{x=mu, \gcd\left(u,\frac{n}{m}\right)=1\right\}$$
and we have $\left|A_m\right|=\varphi(n/m)$.

To study $k$-free sets, it is important to know more about $\mathcal{O}^k(x)$ for each $x\in \Zn$. That is the purpose of our first two lemmas.

\begin{lem}\label{kAm}
If we write $$n=\prod_{i=1}^rp_i^{n_i}\prod_{i=r+1}^sp_i^{n_i} \text{ and } k=u\prod_{i=1}^rp_i^{k_i}$$ with $\gcd(u,p_i)=1, \forall i \in \llbracket1,s\rrbracket$, then for $m$ which divides $n$, m has the form $$m=\prod_{i=1}^rp_i^{m_i}\prod_{i=r+1}^sp_i^{m_i}$$ with $m_i\leqslant n_i, \forall i \in \llbracket1,s\rrbracket$, then we have 
$$k\cdot A_m=A_{m^\prime} \text{ where } m^\prime=m\prod_{i=1}^rp_i^{\min(k_i,n_i-m_i)}.$$
\end{lem}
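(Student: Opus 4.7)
The plan is to work componentwise via the Chinese Remainder Theorem. Under the canonical isomorphism $\mathbb{Z}/n\mathbb{Z}\cong\prod_{i=1}^s\mathbb{Z}/p_i^{n_i}\mathbb{Z}$, each $x$ corresponds to a tuple $(x_1,\ldots,x_s)$; the identity $\gcd(x,n)=m$ then decomposes prime by prime into the conditions $v_{p_i}(x_i)=m_i$ for every $i$, where in the quotient $\mathbb{Z}/p_i^{n_i}\mathbb{Z}$ we adopt the convention $v_{p_i}(0)=n_i$ so as to accommodate the case $m_i=n_i$. Denoting by $V_i$ the slice $\{x_i\in\mathbb{Z}/p_i^{n_i}\mathbb{Z}:v_{p_i}(x_i)=m_i\}$ and by $V_i'$ the analogous slice with $m_i$ replaced by $m_i':=m_i+\min(k_i,n_i-m_i)$ (taking $k_i:=0$ for $i>r$), the claim reduces under CRT to showing that $k\cdot V_i=V_i'$ in each factor $\mathbb{Z}/p_i^{n_i}\mathbb{Z}$.

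For $i>r$ we have $\gcd(k,p_i)=1$, so $k$ acts as a unit on the $i$-th factor, preserving $p_i$-adic valuation; thus $k\cdot V_i=V_i$, matching $V_i'$. For $i\leqslant r$, reduce $k\equiv u\,p_i^{k_i}\pmod{p_i^{n_i}}$. The subcase $m_i=n_i$ is trivial since then $V_i=\{0\}$. For $m_i<n_i$, writing $x_i=p_i^{m_i}z$ with $\gcd(z,p_i)=1$ gives $kx_i\equiv u\,p_i^{m_i+k_i}z\pmod{p_i^{n_i}}$, so the (capped) valuation of $kx_i$ equals $\min(m_i+k_i,n_i)=m_i+\min(k_i,n_i-m_i)$. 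This yields the inclusion $k\cdot V_i\subseteq V_i'$.

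For surjectivity, if $k_i\geqslant n_i-m_i$ then $V_i'=\{0\}$ and the inclusion just proved is already an equality. Otherwise pick $y=p_i^{m_i+k_i}w\in V_i'$ with $\gcd(w,p_i)=1$, and solve $uz\equiv w\pmod{p_i^{n_i-m_i-k_i}}$; this has a solution because $u$ is a unit modulo any power of $p_i$, and the resulting $z\equiv u^{-1}w\pmod{p_i}$ is coprime to $p_i$. Then $x_i:=p_i^{m_i}z$ lies in $V_i$ and satisfies $kx_i\equiv y$. Recombining the components via CRT finally gives $k\cdot A_m=A_{m'}$.

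The main obstacle, such as it is, lies in handling the boundary cases $m_i=n_i$ and $k_i\geqslant n_i-m_i$ uniformly; the $\min(k_i,n_i-m_i)$ appearing in the statement is tuned precisely to fold these degenerate cases into the same formula as the generic one.
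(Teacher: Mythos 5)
Your proof is correct, but it takes a genuinely different route from the paper's. The paper argues globally: writing $x=mv$ with $\gcd(v,n/m)=1$, it computes $\gcd(kx,n)=m\gcd(kv,n/m)=m\gcd(\gcd(k,n),n/m)$ in one line, which gives the inclusion $k\cdot A_m\subseteq A_{m'}$; for the reverse inclusion it avoids any local analysis by exploiting the fact that $A_{m'}$ is a single orbit under multiplication by units of $\Zn$ (every $z\in A_{m'}$ equals $wy$ for some unit $w$ and any fixed $y\in A_{m'}$), so one element of $k\cdot A_m$ lying in $A_{m'}$ propagates to all of $A_{m'}$ because $xw\in A_m$ whenever $x\in A_m$ and $w$ is a unit. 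You instead localize via CRT, track capped $p_i$-adic valuations componentwise, and establish surjectivity by explicitly solving $uz\equiv w\pmod{p_i^{n_i-m_i-k_i}}$. Your version is longer but entirely self-contained and makes the degenerate cases $m_i=n_i$ and $k_i\geqslant n_i-m_i$ completely explicit; the paper's is shorter but silently relies on the (standard, but unstated) transitivity of the unit-group action on each $A_{m'}$. One cosmetic slip: the congruence $k\equiv u\,p_i^{k_i}\pmod{p_i^{n_i}}$ is not literally true, since the cofactor of $p_i^{k_i}$ in $k$ is $u\prod_{j\leqslant r,\,j\neq i}p_j^{k_j}$ rather than $u$; but that cofactor is still a unit modulo $p_i^{n_i}$, which is all your argument actually uses, so the proof stands.
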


\begin{proof}
Let $x\in A_m$, then $x=mv$ with $\gcd(v,n/m)=1$. Thus, we get

$$\begin{aligned}
\gcd(kx,n)&=m\gcd(kv,\frac{n}{m})\\
&=m\gcd(k,\frac{n}{m})\\
&=m\gcd(\gcd(k,n),\frac{n}{m})\\
&=m\gcd(\prod_{i=1}^rp_i^{k_i},\prod_{i=1}^rp_i^{n_i-m_i}\prod_{i=r+1}^sp_i^{n_i-m_i}).
\end{aligned}
$$
What we get is that $k\cdot A_m\subset A_{m^\prime}$.

Conversely, there exists now $y\in A_{m^\prime}$ such that $y=kx$ with $x\in A_m$ (since $A_m\neq \emptyset$). But for all $z$ in $A_{m^\prime}$, there exists $w$, $\gcd(w,n)=1$ and $z=wy$. Clearly, $xw$ belongs to $A_m$ and $z=kxw$, which concludes the proof.

\end{proof}

\begin{lem}\label{lem2}
Let $m$ be a divisor of $n$, $k$ be an integer such that $\gcd(k,n/m)=1$ and $x\in A_m$. Then $$\left|\mathcal{O}^k(x)\right|=l_k\left(\frac{n}{m}\right).$$

\end{lem}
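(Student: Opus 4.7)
The plan is to reduce the question about the orbit in $\Zn$ to a question about the multiplicative order of $k$ modulo $n/m$. First I would apply Lemma~\ref{kAm} to check that $k\cdot A_m\subset A_m$: the hypothesis $\gcd(k,n/m)=1$ means that for every prime $p_i$ dividing $k$ one has $p_i\nmid n/m$, i.e.\ $n_i-m_i=0$, and therefore $\min(k_i,n_i-m_i)=0$. With the notation of Lemma~\ref{kAm} this gives $m'=m$, so multiplication by $k$ sends $A_m$ into itself and the whole orbit $\mathcal{O}^k(x)$ is contained in $A_m$.

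Next, since $\Zn$ is finite and $\mathcal{O}^k(x)$ is stable under multiplication by $k$, its cardinality equals the smallest positive integer $j$ such that $k^jx\equiv x\pmod n$. Writing $x=mu$ with $\gcd(u,n/m)=1$, this congruence is equivalent to
$$(k^j-1)u\equiv 0\pmod{n/m}.$$
Because $u$ is coprime to $n/m$, it is a unit modulo $n/m$, so the condition reduces to $k^j\equiv 1\pmod{n/m}$. By definition of $l_k(n/m)$ the smallest such $j$ is exactly $l_k(n/m)$, which yields $|\mathcal{O}^k(x)|=l_k(n/m)$.

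There is not really a serious obstacle: the statement is essentially a bookkeeping exercise. The only point one has to be a bit careful about is the role played by the assumption $\gcd(k,n/m)=1$, which is used twice — once to guarantee that $l_k(n/m)$ is a well-defined element of $\mathbb{N}^*$, and once (through Lemma~\ref{kAm}) to ensure that the orbit does not leak out of $A_m$, so that the same computation $x=mu$ with $u$ invertible mod $n/m$ describes every element of the orbit.
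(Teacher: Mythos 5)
Your reduction is the same as the paper's: both arguments identify the orbit of $x$ with the multiplicative orbit of $k$ modulo $n/m$ and conclude that its size is $l_k(n/m)$ (the paper phrases this via the isomorphism $\langle x\rangle\cong\mathbb{Z}/(n/m)\mathbb{Z}$, under which $\mathcal{O}^k(x)$ corresponds to $\mathcal{O}^k(1)=\langle k\rangle$). The computation $(k^j-1)u\equiv 0\pmod{n/m}\iff k^j\equiv 1\pmod{n/m}$ is fine.

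One step is justified incorrectly, though the conclusion survives. You assert that because $\Zn$ is finite and $\mathcal{O}^k(x)$ is stable under multiplication by $k$, the orbit size equals the least $j>0$ with $k^jx\equiv x\pmod n$. Finiteness plus forward stability only gives eventual periodicity, not pure periodicity: for $n=8$, $k=2$, $x=1$ the orbit $\{1,2,4,0\}$ is finite and stable, yet no power of $2$ is $1$ modulo $8$. What you actually need is that multiplication by $k$ is \emph{injective} on the subgroup $\langle x\rangle$ of order $n/m$, which holds precisely because $\gcd(k,n/m)=1$; injectivity forces the sequence $(k^jx)_j$ to be purely periodic, so the orbit consists of one full period and its length is the least $j$ with $k^jx=x$. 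You have this ingredient available (it is the same hypothesis you already invoke), so the fix is one sentence, but as written the justification of this step is not valid. Also note that the appeal to Lemma~\ref{kAm} to show the orbit stays in $A_m$ is not what does the work here; containment in $A_m$ alone would not rule out the ``rho-shaped'' behaviour above.
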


\begin{proof}
Since $x\in A_m$, if we denote by $<x>$ the subgroup generated by $x$, we have $$<x>\cong \mathbb{Z}/\left(\frac{n}{m}\right)\mathbb{Z}.$$
Then, since $k$ is invertible in this subgroup $$\mathcal{O}^k(x)\cong \mathcal{O}^k(1)=<k>\subset \mathbb{Z}/\left(\frac{n}{m}\right)\mathbb{Z}$$
and the size of $<k>$ in this subgroup is exactly $l_k(n/m)$. 

\end{proof}

Now, we need a result about specific rooted trees. That is the purpose of the next lemma which will be very useful for the proof of Theorem \ref{algo}.

Let $T$ be a rooted tree where the set of nodes is $V=\left\{v_i\right\}_{i\in I}$ with $I$ a finite set and $E$ is the set of edges. We associate a value $\alpha_i\geqslant0$ to each $v_i$ and we denote by $l_i$ its level (recall that the level of a node is defined by $1 +$ the minimal number of connections between the node and the root). Assume that $T$ has the following property :

\begin{equation}\label{tree1}
 \text{If } v_i \text{ is the parent of } v_j, \text{ then } \alpha_i<\alpha_j.
\end{equation}
In other words, $\alpha$ is strictly increasing in each branche. Notice that this condition implies that if $v_i$ is not the root of $T$, $\alpha_i>0$. We search a subset $A$ of $I$ satisfying :

\begin{equation}\label{tree2}
\forall (i,j) \in A^2, (v_i,v_j)\notin E \text{ and } \alpha_i\neq0
\end{equation} 
which maximizes the quantity $$\Lambda_A=\sum_{i\in A}\alpha_i.$$

We denote by $l$ the maximal level in $T$ and we construct a set $B$ by the following algorithm :
Initialization : $B=\left\{v_i | l_i=l\right\}$.
Then for $k$ from $1$ to $l-1$ : for all $i$ such that $l_i=l-k$, we add $v_i$ to $B$ if and only if $\alpha_i\neq 0$ and there is no child of $v_i$ in $B$.

It is clear that $B$ satisfies (\ref{tree2}). Actually, $B$ is the required set for our problem.

\begin{lem}
$B$ is the unique subset of $I$ maximizing $\Lambda_A$ among the sets $A$ satisfying (\ref{tree2}).
\end{lem}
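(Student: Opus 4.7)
The plan is to produce, for any set $A$ satisfying (\ref{tree2}), an injection $f : A \to B$ along which the $\alpha$-values only increase, and then read off both the maximality of $\Lambda_B$ and the uniqueness of $B$ from the equality cases of the resulting estimate.

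First I would record the two structural properties of $B$ that come straight from the construction: $B$ itself satisfies (\ref{tree2}), and for every index $i$ with $\alpha_i > 0$ that is not in $B$, at least one child of $v_i$ already lies in $B$, for otherwise the algorithm would have inserted $v_i$ when processing level $l_i$. This dominating-type property lets me define the map by $f(a)=a$ when $a\in B$, and otherwise letting $f(a)$ be any child of $v_a$ whose index lies in $B$; such a child exists because $\alpha_a\neq 0$ by (\ref{tree2}).

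Next I would check that $f$ is injective. Suppose $f(a_1)=f(a_2)=b$. If both $a_i$ lie in $B$ then $a_1=b=a_2$; if neither does, then $v_b$ is a child of both $v_{a_1}$ and $v_{a_2}$, and uniqueness of the parent in a tree forces $a_1=a_2$; in the mixed case, say $a_1\in B$ and $a_2\notin B$, one gets $b=a_1$ a child of $v_{a_2}$, so $(v_{a_1},v_{a_2})\in E$ while $\{a_1,a_2\}\subset A$, contradicting (\ref{tree2}). With injectivity in hand, property (\ref{tree1}) yields $\alpha_{f(a)}\geqslant\alpha_a$, with equality exactly when $f(a)=a$, since otherwise $f(a)$ names a child of $v_a$ and so carries a strictly larger value. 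Summing gives
$$\Lambda_A \;=\; \sum_{a\in A}\alpha_a \;\leqslant\; \sum_{a\in A}\alpha_{f(a)} \;\leqslant\; \sum_{b\in B}\alpha_b \;=\; \Lambda_B,$$
where the first inequality is an equality iff $A\subset B$, and the second iff $f(A)=B$. When $A\subset B$ one has $f|_A=\mathrm{id}$, so $f(A)=A$, and the two equalities together force $A=B$. This simultaneously shows that $B$ is a maximizer and that it is the unique one.

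The main delicate point is the mixed case of the injectivity check: it is precisely the independence condition on $A$ in (\ref{tree2}) that rules out $f$ sending two distinct elements of $A$ to the same vertex of $B$ via a parent-child relation, and it is the strict increase in (\ref{tree1}) that upgrades the weight comparison to a strict gain whenever $f$ moves a vertex downward. Everything else amounts to bookkeeping with the two inequalities.
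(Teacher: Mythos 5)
Your proof is correct, but it takes a genuinely different route from the paper's. The paper argues by induction on $\left|I\right|$: it splits $T$ at the root into the subtrees rooted at the children of $v_0$, compares $B$ with a hypothetical maximizer $C$ component by component, and runs a four-way case analysis according to whether $v_0$ belongs to $B$ and/or to $C$ (the delicate case being $v_0\notin B$, $v_0\in C$, where the induction hypothesis is applied to a branch augmented by the root). Your argument is direct and non-inductive: the injection $f:A\to B$ that fixes $A\cap B$ and pushes every other element of $A$ down to a child lying in $B$ packages both the maximality and the uniqueness into the equality analysis of a single two-step inequality, which is arguably more transparent and avoids the bookkeeping with subtrees. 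Two small points deserve an explicit word. First, the dominating property you invoke (every $v_i\notin B$ with $\alpha_i>0$ has a child in $B$) does hold, but its proof uses that all nodes of maximal level are placed in $B$ at initialization, so such a $v_i$ must have been examined and rejected during the upward sweep; state this. Second, your claim that equality in $\sum_{a\in A}\alpha_{f(a)}\leqslant\sum_{b\in B}\alpha_b$ holds if and only if $f(A)=B$ requires knowing that $\alpha_b>0$ for every $b\in B$: a priori the inequality could be an equality while $B\setminus f(A)$ contains zero-valued vertices. This positivity is true --- by (\ref{tree1}) every non-root node has positive value, and the algorithm only inserts a node at a later stage when its value is nonzero --- except for the degenerate one-node tree, which the paper likewise dismisses. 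With these two remarks made explicit, your proof is complete and constitutes a clean alternative to the paper's induction.
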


\begin{proof}
We proceed by induction on the size of $I$. If $\left|I\right|=1$ there is nothing to say.

Let $n$ be an integer and assume that the lemma holds for all $k$ less than $n$. Let $\left|I\right|=n+1$, $B$ the set from the algorithm applied to $T$ and $C$ be a subset of $I$ maximizing $\Lambda_A$ among the sets $A$ satisfying (\ref{tree2}). We denote by $v_0$ the root of $T$, and $v_i$, $i \in \llbracket1,K\rrbracket$ the childs of $v_0$. We also define $T_i$ the rooted subtree of $T$ of root $v_i$ for all $i$ in $\llbracket1,K\rrbracket$, $B_i=B\cap T_i$ and $C_i=C\cap T_i$. By induction, for all $i$, $\Lambda_{C_i}\leqslant\Lambda_{B_i}$ with equality if and only if $B_i=C_i$.

If $v_0\in B$ and $v_0\in C$, we have $$\Lambda_{C}-\alpha_0=\sum_{i=1}^K\Lambda_{C_i}\leqslant\sum_{i=1}^K\Lambda_{B_i}=\Lambda_{B}-\alpha_0.$$
Thus, by the definition of $C$, this is an equality, and finally $B=C$.

If $v_0\notin B$ and $v_0\notin C$, we have
$$\Lambda_{C}=\sum_{i=1}^K\Lambda_{C_i}\leqslant\sum_{i=1}^K\Lambda_{B_i}=\Lambda_{B}$$
which ensure that $B=C$ for the same reason.

If $v_0\in B$ and $v_0\notin C$, since $\alpha_0>0$ (otherwise, $v_0$ is not in $B$ according to the algorithm), we have 
$$\Lambda_{C}=\sum_{i=1}^K\Lambda_{C_i}\leqslant\sum_{i=1}^K\Lambda_{B_i}=\Lambda_{B}-\alpha_0<\Lambda_B$$
which leads to a contradiction.

If $v_0\notin B$ and $v_0\in C$, $\alpha_0>0$ (since $C$ satisfies (\ref{tree2})) and it means that there exists $i_0$ in $\llbracket1,K\rrbracket$ such that $v_{i_0}\in B$. Then, we consider the branche from $v_0$ which contains $v_{i_0}$. If $K>1$, its size is strictly less than $n+1$ and we can apply the induction hypothesis to get $\Lambda_{C_{i_0}}+\alpha_0<\Lambda_{B_{i_0}}$. Thus, 

$$\Lambda_{C}=\sum_{i\neq i_0}\Lambda_{C_i}+\Lambda_{C_{i_0}}+\alpha_0<\sum_{i\neq i_0}\Lambda_{B_i}+\Lambda_{B_{i_0}}=\Lambda_{B}$$
and we have a contradiction. If $K=1$, we denote by $v_1$ the only child of $v_0$, $v_1\in B$ and $v_1\notin C$, and considering now the subtrees whose the roots are every nodes of level $2$, we get

$$\Lambda_{C}=\sum\Lambda_{C^\prime_i}+\alpha_0<\sum\Lambda_{B^\prime_i}+\alpha_1=\Lambda_{B}$$
where we use $\alpha_1>\alpha_0$. This is a contradiction.

Finally, $B=C$ in every cases and the lemma is proved.

\end{proof}

\section{Proof of Theorems \ref{kprimen}, \ref{km} and \ref{k2m}}\label{sec3}

We first deal with the Theorem \ref{kprimen}, the case $\gcd(n,k)=1$.

\begin{proof}
As mentioned in the introduction, let $l_k(d)$ be the order of $k$ in $\left(\mathbb{Z}/d\mathbb{Z}\right)^*$ and $I$ be the indicator function of odd numbers.

By lemma \ref{kAm}, $\mathcal{O}^{k}(x)\subset A_m$, for all $x$ in $A_m$. Therefore, we consider the suitable partition 

$$\begin{aligned}
\left(\Zn\right)\setminus\left\{0\right\}&=\bigsqcup_{m|n, m<n}A_m.
\end{aligned}$$
Notice that this partition is trivial if $n$ is prime. By lemma \ref{lem2}, if $x\in A_m$, we have $$\left|\mathcal{O}^{k}(x)\right|=l_k\left(\frac{n}{m}\right).$$
Hence, we can make a partition of $A_m$ in $\varphi(n/m)/l_k(n/m)$ distinct orbits of length $l_k(n/m)$.
In each orbit, to get an optimal $k$-free set, we have to take the most possible elements without taking two consecutive elements. But the orbits are cyclic, that's why if the length $l$ of an orbit is even, we can take $l/2$ elements, whereas if $l$ is odd, we can take only $(l-1)/2$ elements. We finally get the formula

$$\begin{aligned} R_k(n)=& \sum_{d|n, d\neq1}{\frac{\varphi(d)}{l_k(d)}\left(\frac{l_k(d)-I(l_k(d))}{2}\right)}\\
&= \frac{n-1}{2}-\sum_{d|n, d\neq1}{\frac{\varphi(d)I(l_k(d))}{2l_k(d)}}.
\end{aligned}$$

\end{proof}

Actually, if we fix $k$, $R_k(n)$ is asymptotically $(n-1)/2-o(n)$. Indeed, for all $\varepsilon>0$, there exists $d_0$ such that $\log_k{d_0}\geqslant1/\varepsilon$ and there exists $n$ such that $d_0^2/6\leqslant\varepsilon n/2$. Thus,

$$\begin{aligned}
\sum_{d|n, d\neq1}{\frac{\varphi(d)I(l_d)}{2l_d}}&=\sum_{d|n, d\neq1, d\leqslant d_0}{\frac{\varphi(d)I(l_d)}{2l_d}}+\sum_{d|n, d\neq1, d>d_0}{\frac{\varphi(d)I(l_d)}{2l_d}}\\
&\leqslant \sum_{d|n, d\neq1, d\leqslant d_0}{\frac{\varphi(d)}{6}}+\sum_{d|n, d\neq1, d>d_0}{\frac{\varphi(d)}{2log_2{d}}}\\
&\leqslant \frac{d_0^2}{6}+\frac{\varepsilon n}{2}\\
&\leqslant \varepsilon n.
\end{aligned}$$

Now, we consider the case $n=k^2m$, for which we have a suitable partition of $\Zn$ :

\begin{lem}
In this case, we have
$$\Zn=\left(k^2\Zn\right)\bigsqcup\left(\bigcup_{h\not\equiv0\Mod k}\left\{h,kh\right\}\right).$$
\end{lem}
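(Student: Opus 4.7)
The plan is to verify directly the two defining conditions of the claimed disjoint union: that $k^2\Zn$ is disjoint from $\bigcup_{h\not\equiv0\Mod k}\{h,kh\}$, and that every element of $\Zn$ lies in one of them.

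For disjointness, suppose $y \in k^2\Zn$ and $y \in \{h, kh\}$ for some $h\not\equiv0\Mod k$. The case $y = h$ is immediately impossible because $y$ is divisible by $k$ while $h$ is not. In the case $y = kh$, lifting the relation $kh \equiv 0 \Mod{k^2}$ (which holds in $\Zn$ since $y \in k^2\Zn$ and $k^2 \mid k^2m = n$) to the integers gives $kh = k^2 s + k^2 m t$ for some $s, t \in \mathbb{Z}$, hence $h = ks + kmt$, which is divisible by $k$ — a contradiction.

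For the covering, I would stratify an arbitrary $y \in \Zn$ by the $k$-divisibility of its canonical integer representative in $\{0, 1, \ldots, k^2 m - 1\}$. There are three cases. If $k^2 \mid y$, then $y \in k^2\Zn$. If $k \nmid y$, then $h := y$ satisfies $h \not\equiv 0 \Mod k$ and $y \in \{h, kh\}$. If $k \mid y$ but $k^2 \nmid y$, set $h := y/k \in \mathbb{Z}$; then $k \nmid h$ (otherwise $k^2 \mid y$), so $h$ is an admissible index and $y = kh \in \{h, kh\}$.

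This is essentially bookkeeping and presents no real obstacle; the only minor subtlety is that ``division by $k$'' is not intrinsic to $\Zn$, but since we only need existence of some integer $h$ with $y = kh$ in $\Zn$ and $k \nmid h$, fixing any integer lift of $y$ and dividing in $\mathbb{Z}$ suffices.
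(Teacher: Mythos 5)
Your proof is correct and follows essentially the same route as the paper: a direct verification that the pairs $\{h,kh\}$ with $h\not\equiv 0\Mod k$ together with $k^2\Zn$ cover $\Zn$ (by cases on divisibility of a representative by $k$ and $k^2$), and that $kh\not\equiv 0\Mod{k^2}$ when $k\nmid h$, giving disjointness. You are merely more explicit than the paper about lifting to integer representatives, which is a harmless refinement of the same argument.
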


\begin{proof}
Indeed, if $ x \not\equiv0\Mod {k^2}$ and $x \equiv 0\Mod k$, then $ x=kh$ with $h\not\equiv0\Mod k$. Thus, we have all the elements in this union. Moreover, if we have $h\not\equiv0\Mod k$, then $kh\not\equiv0\Mod {k^2}$, which shows that the first union is disjoint.
\end{proof}

Let see now why this is a good repartition of elements for our problem, through the proof of Theorem \ref{k2m} :

\begin{proof}
We remark two main things :

\begin{itemize}
\item $k^2\Zn$ is stable for multiplication by $k$.
\item If $h\not\equiv0\Mod k$, we can not write $h=ku$ in $k^2\Zn$.
\end{itemize}

We consider now $A$ a $k$-free set in $\Zn$. First, for each $h\not\equiv0\Mod k$, at most one of $\left\{h,kh\right\}$ lies in $A$. Furthermore, by the first remark, $A\cap k^2\Zn$ is also a $k$-free set, which can be easily seen equivalent to a $k$-free set in $\mathbb{Z}/m\mathbb{Z}$. This leads to

$$R_k(k^2m)\leqslant R_k(m)+\left|\left\{h\not\equiv0\Mod k\right\}\right|=R_k(m)+\left(k^2-k\right)m.$$

Let see now the construction of an optimal $k$-free set. By the second remark, we can take every $h\not\equiv0\Mod k$ in $A$, and we now that $kh\notin k^2\Zn$, so we can take $R_k(m)$ elements from $k^2\Zn$ in $A$. Thus, we get

$$R_k(k^2m)=R_k(m)+\left(k^2-k\right)m$$
and that concludes the proof.

\end{proof}

Finally, we consider $n=km$ with $m\not\equiv0\Mod k$. In this case, we have :

\begin{lem}
$$\Zn=\bigcup_{h\not\equiv0\Mod k}\left\{h,kh\right\}.$$
\end{lem}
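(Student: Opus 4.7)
The plan is to verify this cover by case analysis on whether $x\in\Zn$ is divisible by $k$. Since $k\mid n$, divisibility by $k$ descends from $\mathbb{Z}$ to $\Zn$, so this dichotomy is well-defined.

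If $x\not\equiv 0\Mod{k}$, then taking $h=x$ suffices: the index $h=x$ is admissible, and $x\in\left\{h,kh\right\}$ trivially.

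The substantive case is $x\equiv 0\Mod{k}$. Writing $n=km$, the fact that $k\mid x$ gives some $y_0\in\Zn$ with $ky_0\equiv x\Mod{n}$, and the full fibre of $x$ under multiplication by $k$ on $\Zn$ is then the arithmetic progression
$$\left\{y_0+jm : 0\leqslant j\leqslant k-1\right\},$$
since the kernel of the multiplication-by-$k$ map on $\Zn$ is exactly the order-$k$ subgroup generated by $m$. I claim at least one term of this progression is not divisible by $k$. Indeed, if every $y_0+jm$ were divisible by $k$, subtracting the $j=0$ term from the $j=1$ term would yield $m\equiv 0\Mod{k}$, contradicting the hypothesis. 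Choosing $h=y_0+jm$ with $h\not\equiv 0\Mod{k}$, one has $kh=x$, so $x\in\left\{h,kh\right\}$ for this admissible $h$.

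The only delicate point is the last contradiction, which is precisely where the hypothesis $k\nmid m$ is used; in the complementary regime $k^2\mid n$ the multiples of $k^2$ fail to appear as $kh$ for any $h\not\equiv 0\Mod{k}$, which is exactly why Theorem \ref{k2m} is forced to split off the separate summand $k^2\Zn$.
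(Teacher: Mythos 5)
Your proof is correct and is essentially the paper's argument: the paper also takes a preimage $u$ of $x$ under multiplication by $k$ and, when $k\mid u$, replaces it by $u+m$ (i.e.\ replaces $x$ by $x+n$), which is exactly your observation that consecutive elements of the fibre $y_0+\langle m\rangle$ differ by $m\not\equiv 0\Mod{k}$. Your identification of the kernel of the multiplication-by-$k$ map as the order-$k$ subgroup generated by $m$ is just a slightly more structural packaging of the same step.
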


\begin{proof}
If $ x\equiv0\Mod k$, there exists $u$ such that $x=ku$. If $u\not\equiv0\Mod k$, $x$ is in the right form. Else, $u\equiv0\Mod k$, then there exists $v$, $u=kv$ and we have $x=x+n=x+km=k^2v+km$. But $m\not\equiv0\Mod k$ by hypothesis, then we can write $m=lk+a$ with $a\not\equiv0\Mod k$. We get $$x+km=k(kv+lk+a).$$
Since $h=kv+lk+a\not\equiv0\Mod k$, we have written $x=x+km=kh$ with $h\not\equiv0\Mod k$, which concludes the lemma.
\end{proof}

We can now easily prove Theorem \ref{km}.

\begin{proof}
If $A$ is a $k$-free set, for each $h\not\equiv0\Mod k$, at most one of $\left\{h,kh\right\}$ lies in $A$, then $\left|A\right|\leqslant (k-1)m$. If $h\not\equiv0\Mod k$, we can not write $h=ku$ in $\Zn$ since $n=km$. Thus $\left\{h\not\equiv0\Mod k\right\}$ is a $k$-free set and we get

$$R_k(km)=(k-1)m.$$

\end{proof}

\section{Theorem \ref{algo} : the case $\gcd(k,n)\neq1$}\label{sec4}

The aim is to define a forest (a disjoint union of rooted trees) satisfying (\ref{tree1}) such that the algorithm of Section \ref{sec2} gives an optimal $k$-free set. Recall that if $m$ divides $n$, we denote by $A_m$ the subset of $\Zn$ $$A_m:=\left\{x, \gcd(x,n)=m\right\}$$
and we have $\left|A_m\right|=\varphi(n/m)$. The disjoint union of $A_m$ is a partition of $\Zn$.

Let $G=(V,E)$ be a graph with the set of vertices $V=\left\{m\right\}_{m|n}$ and we define the set of edges $E$ by :
\begin{equation}\label{G}
(m,m^\prime)\in E \text{ if and only if } m<m^\prime \text{ and } k\cdot A_m=A_{m^\prime}.
\end{equation}
We first need to well understand this graph, then we will associate suitable values to vertices for our problem.
We write $$n=\prod_{i=1}^rp_i^{n_i}\prod_{i=r+1}^sp_i^{n_i} \text{ and } k=u\prod_{i=1}^rp_i^{k_i}$$ with $\gcd(u,p_i)=1, \forall i \in \llbracket1,s\rrbracket$ and $k_i>0$ for all $i$. Let denote by $\mathcal{M}$ the set of divisors of $n$ of the form $$m=\prod_{i=1}^sp_i^{m_i}$$
with $m_i\leqslant n_i,  \forall i\in \llbracket 1,s\rrbracket$, such that there exists $i_0\leqslant r$ satisfying $m_{i_0}<\min(k_{i_0},n_{i_0})$. The next proposition gives the structure of $G$.

\begin{prop}
$G$ is a disjoint union of rooted trees. Furthermore :
\begin{enumerate}[(i)]
\item A connected component is defined by the choice of $\left\{d_i\right\}_{i=r+1\ldots s}$ with $d_i\leqslant n_i$.
\item The leaves are exactly the elements of $\mathcal{M}$.
\item The root of the tree defined by $\left\{d_i\right\}_{i=r+1\ldots s}$ is $$m=\prod_{i=1}^rp_i^{n_i}\prod_{i=r+1}^sp_i^{d_i}.$$
\item The level of $m$ is $j_m+1$ where $$j_m=\min\left\{j | jk_i\geqslant n_i-m_i , \forall i\in \llbracket1,r\rrbracket\right\}.$$ 

\end{enumerate}
\end{prop}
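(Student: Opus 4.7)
The plan is to study the self-map $\phi$ on divisors of $n$ provided by Lemma~\ref{kAm}: writing $m = \prod_{i=1}^s p_i^{m_i}$, we have $k \cdot A_m = A_{\phi(m)}$ with $\phi(m) = m \prod_{i=1}^r p_i^{\min(k_i, n_i - m_i)}$. Thus $\phi$ preserves $m_i$ for $i > r$ and replaces $m_i$ by $\min(n_i, m_i + k_i)$ for $i \leq r$, so $\phi(m) \geq m$ with equality iff $m_i = n_i$ for every $i \leq r$. From the definition of $E$, each vertex $m$ has at most one neighbor strictly larger than itself, namely $\phi(m)$ when $\phi(m) \neq m$. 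Iterating $\phi$ strictly increases $m$ until it stabilizes, so $G$ has no cycles and is therefore a forest, each non-root vertex having $\phi(m)$ as its unique parent.

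Claims (i) and (iii) then follow at once. Since $\phi$ fixes the tuple $(m_{r+1}, \ldots, m_s)$, this tuple is invariant along each edge; conversely, any two divisors sharing this tuple can be connected by iterating $\phi$ until both reach the unique common fixed point, namely the divisor with $m_i = n_i$ for $i \leq r$ and exponents $d_{r+1}, \ldots, d_s$ elsewhere. This gives exactly one root per choice of $(d_{r+1}, \ldots, d_s)$, as in (iii), and one connected component per such choice, as in (i).

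For (ii), a leaf is a vertex with no strict predecessor under $\phi$, so I would solve the equation $\phi(m'') = m$ coordinate by coordinate for $i \leq r$. When $m_i < n_i$, the unique solution is $m''_i = m_i - k_i$, which is valid only if $m_i \geq k_i$; when $m_i = n_i$, any $m''_i \in [\max(0, n_i - k_i), n_i]$ works. A strict predecessor therefore fails to exist exactly when there is some $i_0 \leq r$ with $m_{i_0} < n_{i_0}$ and $m_{i_0} < k_{i_0}$, i.e., when $m \in \mathcal{M}$. I expect this two-regime case split to be the one slightly delicate point, particularly because one has to confirm that every root (where $m_i = n_i$ for all $i \leq r$) does admit a strict predecessor and is therefore not a leaf.

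Finally, for (iv), I iterate $\phi$ explicitly. After $j$ applications, the exponent of $p_i$ (for $i \leq r$) becomes $\min(n_i, m_i + jk_i)$, and the orbit reaches the root precisely when each such exponent has saturated at $n_i$, i.e., when $jk_i \geq n_i - m_i$ for every $i \leq r$. The smallest such $j$ is $j_m$, which is the distance from $m$ to the root, giving level $j_m + 1$.
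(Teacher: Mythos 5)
Your proposal is correct and takes essentially the same route as the paper: both arguments rest on the explicit coordinate-wise formula for the iterates $k^j\ast m=m\prod_{i\leqslant r}p_i^{\min(jk_i,n_i-m_i)}$ of the dilation map coming from Lemma~\ref{kAm}, and deduce (i), (iii) and (iv) by tracking when the exponents saturate at $n_i$. The only minor divergence is in (ii), where you solve $\phi(m'')=m$ for one-step preimages directly (correctly flagging the one delicate point, that every root admits a strict predecessor and so is not a leaf), whereas the paper instead exhibits, for each $m'\notin\mathcal{M}$, an explicit $t$-step preimage lying in $\mathcal{M}$ via the same three-case coordinate analysis.
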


\begin{proof}
We define 
$$k^j\ast m=m\prod_{i=1}^r p_i^{\min(jk_i,n_i-m_i)}.$$
By lemma \ref{kAm}, $A_{k\ast m}=k\cdot A_m$, then if $(m,m^\prime)$ is an edge, we have $m_i=m^\prime_i$ for all $i$ in $\llbracket r+1, s\rrbracket$. Thus, if there exists a path between two vertices, they have the same $\left\{d_i\right\}_{i=r+1\ldots s}$.

The next lemma shows that a vertice is either in $\mathcal{M}$ or has a descendant in $\mathcal{M}$.

\begin{lem}
Let $m^\prime=\prod_{i=1}^sp_i^{m^\prime_i}$ be a divisor of $n$ which is not in $\mathcal{M}$, then there exists $t>0$ and $m$ in $\mathcal{M}$ such that $m^\prime=k^t\ast m$.
\end{lem}

\begin{proof}
Let $t$ defined by $$t=\min\left\{j | \exists i_0\leqslant r, m^\prime_{i_0}-jk_{i_0}<\min(k_{i_0},n_{i_0})\right\}$$
and define $\alpha_i=\max(0,m^\prime_i-tk_i)$ and $$m=\prod_{i=1}^rp_i^{\alpha_i}\prod_{i=r+1}^{s}p_i^{m^\prime_i}$$ which belongs to $M$ by definition of $t$. Notice that $t>0$ since $m^\prime\notin \mathcal{M}$. Thus, we have
$$\begin{aligned}
k^t\ast m&=m\prod_{i=1}^r p_i^{\min(tk_i,n_i-m_i)}\\
&=\prod_{i=1}^rp_i^{\alpha_i+\min(tk_i,n_i-\alpha_i)}\prod_{i=r+1}^{s}p_i^{m^\prime_i}.
\end{aligned}
$$
We have to study three cases :

\begin{itemize}
\item $\alpha_i=0$ and $k_i<n_i$ : $m^\prime_i\geqslant k_i$ since $m^\prime\in M$, then $m^\prime_i=tk_i$ by the definition of $t$ and we get in this case $\alpha_i+\min(tk_i,n_i-\alpha_i)=m^\prime_i$.
\item $\alpha_i=0$ and $n_i\leqslant k_i$ : $m^\prime_i=n_i$ since $m'\in M$ and  we have $\alpha_i+\min(tk_i,n_i-\alpha_i)=n_i=m^\prime_i$. 
\item Otherwise, $n_i-\alpha_i=n_i-m^\prime_i+tk_i\geqslant tk_i$, then $\alpha_i+\min(tk_i,n_i-\alpha_i)=m^\prime_i$.
\end{itemize}
We finally get $k^t\ast m=m^\prime$, as we expected.
\end{proof}
Conversely, if $m\in M$ and $t>0$, $k^t\ast m \notin M$, and we get that vertices in $\mathcal{M}$ have no child.
Moreover, if we consider $$m=\prod_{i=1}^rp_i^{n_i}\prod_{i=r+1}^sp_i^{d_i}$$ it is clear that $k\ast m=m$, it means that $m$ have no parent. Finally, if $m^\prime$ have the same $\left\{d_i\right\}_{i=r+1\ldots s}$, we have $k^{j_m}\ast m^\prime=m$ and $k^{j_m-1}\ast m^\prime\neq m$ by the definition of $j_m$.

Through those observations, we get the conclusions of the proposition.

\end{proof}

Now, we need to see how to give valuations for vertices. The main problem comes from roots, which are the $m$ satisfying $k\cdot A_m=A_m$. The next lemma computes the maximal size of a $k$-free set in $A_m$ when $m$ is a root of our graph.

\begin{lem}
If $m$ is a root of our graph (which is tantamount to $\gcd(k,n/m)=1$), the maximum size of a $k$-free set included in $A_m$ is
$$R_k(A_m):=\frac{\varphi(n/m)}{l_k(n/m)}\left(\frac{l_k(n/m)-I(l_k(n/m))}{2}\right).$$
\end{lem}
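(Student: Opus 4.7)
The plan is to mirror the argument used for Theorem \ref{kprimen}, but applied to the single subset $A_m$ rather than to the union over all proper divisors.

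First I would note that the condition ``$m$ is a root'' is equivalent to $\gcd(k,n/m)=1$: by Lemma \ref{kAm}, the multiplier $m'$ associated to $k\cdot A_m$ equals $m$ precisely when $\min(k_i,n_i-m_i)=0$ for every $i\le r$, i.e.\ when no prime factor of $k$ appears in $n/m$. This is exactly the hypothesis of Lemma \ref{lem2}, so for every $x\in A_m$ we have $|\mathcal{O}^k(x)|=l_k(n/m)$.

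Next, since $k\cdot A_m=A_m$, multiplication by $k$ acts on $A_m$ as a permutation whose orbits all have the common length $l:=l_k(n/m)$. Consequently $A_m$ decomposes into $\varphi(n/m)/l$ disjoint orbits, each of which is a cyclic orbit $\{x,kx,k^2x,\ldots,k^{l-1}x\}$ with $k^l x=x$. The $k$-free condition on a subset $A\subset A_m$ forbids precisely the pairs $(k^ix,k^{i+1}x)$ within a single orbit (there are no cross-orbit constraints, since $k$ preserves orbits). So the problem reduces, independently in each orbit, to finding a maximum independent set in the cycle graph $C_l$.

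The standard cycle bound gives $\lfloor l/2\rfloor=(l-I(l))/2$ independent vertices, and this is realized by an explicit choice (take every other element of the cycle, omitting one when $l$ is odd). Summing over all $\varphi(n/m)/l$ orbits produces
$$R_k(A_m)=\frac{\varphi(n/m)}{l_k(n/m)}\cdot\frac{l_k(n/m)-I(l_k(n/m))}{2},$$
as claimed. There is no real obstacle here beyond checking that the argument for Theorem \ref{kprimen} localises properly to a single $A_m$; the two lemmas of Section \ref{sec2} do all the work, so the proof should be short.
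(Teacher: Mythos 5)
Your argument is correct and is essentially the paper's own proof: the paper identifies $A_m$ with the units of $\mathbb{Z}/(n/m)\mathbb{Z}$ and then invokes the orbit decomposition from the proof of Theorem \ref{kprimen}, which is exactly the cycle-partition and $\lfloor l/2\rfloor$ independent-set count you carry out explicitly via Lemmas \ref{kAm} and \ref{lem2}. No gap; your version just inlines what the paper does by reference.
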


\begin{proof}
We have the isomorphism :
$$A_m\cong A^\prime_1:=\left\{x\in\mathbb{Z}/(n/m)\mathbb{Z}, \gcd(x,\frac{n}{m})=1\right\}.$$
But we are in the case $\gcd(k,n/m)=1$, and if we look through the proof of Theorem \ref{kprimen}, we get immediately the result.

\end{proof}

Thus, we define the valuation of vertices for all $m$ which divides $n$ : 
$$\alpha_m=\left\{\begin{aligned}  &R_k(A_m) \text{ if } m \text{ is a root} \\
&\varphi\left(\frac{n}{m}\right) \text{ otherwise}.
\end{aligned}\right.$$
Notice that our graph has the property \ref{tree1}, which we recall here :
$$ \text{If } v_i \text{ is the parent of } v_j, \text{ then } \alpha_i<\alpha_j.$$
When we apply the algorithm of Section \ref{sec2}, we get a set $B$ of vertices. To construct a $k$-free set, we can join $A_m$ for $m$ in $B$ and not a root, and for the roots $m$ in $B$ we can take $K_m$ a maximal $k$-free set in $A_m$. More precisely, we define 
$$\overline{B}:=\left(\bigsqcup_{\substack{m\in B \\ \gcd(k,n/m)\neq1}}A_m\right)\bigsqcup\left(\bigsqcup_{\substack{m\in B \\ \gcd(k,n/m)=1}} K_m\right)$$
which is clearly a $k$-free set since $B$ satisfies \ref{tree2} and by the definition of $K_m$.

\begin{prop}\label{compute}
$\overline{B}$ is an optimal $k$-free set in $\Zn$ and has size
$$\sum_{\substack{m\in B \\ \gcd(k,n/m)\neq1}}\varphi\left(\frac{n}{m}\right)+\sum_{\substack{m\in B \\ \gcd(k,n/m)=1}}R_k(A_m).$$
\end{prop}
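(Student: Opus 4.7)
The size claim is essentially a rewrite of the definition of $\overline{B}$: the non-root summands $A_m$ contribute $\varphi(n/m)$ each and the root summands $K_m$ contribute $R_k(A_m)$, so the given formula is the cardinality of $\overline{B}$ by construction. The substantive content is the optimality assertion, which I will attack by translating the $k$-free condition into weighted local inequalities indexed by the vertices of the forest $G$, then bounding the resulting expression using the maximality of $B$ on the tree.

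Let $A \subset \Zn$ be an arbitrary $k$-free set and set $a_m := |A \cap A_m|$, so that $|A| = \sum_{m \mid n} a_m$. I will derive two families of inequalities. For a root $m$ of $G$ (i.e.\ $\gcd(k, n/m) = 1$), Lemma \ref{kAm} implies that $k$ permutes $A_m$, so $A \cap A_m$ is itself a $k$-free subset of $A_m$ and $a_m \leq R_k(A_m) = \alpha_m$. For a non-root $m$, the target inequality is
\[ a_m + f_m \, a_{k \ast m} \leq |A_m| = \alpha_m, \qquad f_m := |A_m|/|A_{k \ast m}|. \]
To obtain this I plan to use that multiplication by $k$ sends $A_m$ surjectively onto $A_{k \ast m}$ (Lemma \ref{kAm}) with fibers all of equal size $f_m \geq 1$, and to combine this with the $k$-free condition $k(A \cap A_m) \subset A_{k \ast m} \setminus A$, which forces $|k(A \cap A_m)| \leq |A_{k \ast m}| - a_{k \ast m}$ and therefore $a_m \leq f_m (|A_{k \ast m}| - a_{k \ast m})$.

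Summing the root inequality over roots $m \in B$ and the non-root inequality over non-root $m \in B$, the right-hand sides add to $\Lambda_B$; regrouping by the variable $a_m$ on the left yields
\[ \sum_{m \mid n} c_m \, a_m \leq \Lambda_B, \qquad c_m := \mathbf{1}[m \in B] + \sum_{\substack{m' \in B \\ m' \text{ child of } m}} f_{m'}. \]
To conclude I will check that $c_m \geq 1$ whenever $a_m > 0$ (equivalently whenever $\alpha_m > 0$, since $a_m \leq \alpha_m$ always): if $m \in B$ this is immediate, and if $m \notin B$ with $\alpha_m > 0$ then the algorithm of Section \ref{sec2} forces some child $m'$ of $m$ in the tree to lie in $B$, so $c_m \geq f_{m'} \geq 1$. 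This yields $|A| \leq \Lambda_B = |\overline{B}|$.

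The main obstacle I expect is justifying the constant-fiber statement behind the non-root inequality, namely that $k \colon A_m \to A_{k \ast m}$ has every fiber of the same size $f_m$. I plan to deduce this from the transitive action of the unit group $(\Zn)^{\ast}$ on $A_{k \ast m}$: this action also preserves $A_m$ and commutes with multiplication by $k$, so multiplication by a suitable unit identifies any two fibers bijectively, forcing the common fiber size to be $|A_m|/|A_{k \ast m}|$.
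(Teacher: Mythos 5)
Your argument is correct, and it takes a genuinely different route from the paper. The paper proves optimality by an exchange argument: assuming a $k$-free set $C$ with $\left|C\right|>\left|\overline{B}\right|$, it picks $x\in C\setminus\overline{B}$ of maximal level and swaps $x$ for its preimage fibre $k^{-1}(x)$ in a child class, which has size $\varphi(n/m^\prime)/\varphi(n/m)>1$, to get a strictly larger $k$-free set and a contradiction. You instead run a weighted-counting (LP-duality flavoured) argument: one linear inequality in the quantities $a_m=\left|A\cap A_m\right|$ per vertex of $B$, summed so that the coefficient of each $a_m$ with $a_m>0$ is at least $1$, giving $\left|A\right|\leqslant\Lambda_B$ in one shot for every $k$-free $A$. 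The key technical point both proofs need is the same -- that multiplication by $k$ is a constant-fibre surjection $A_m\to A_{k\ast m}$ of fibre size $\varphi(n/m)/\varphi(n/(k\ast m))$ -- but the paper asserts it without proof while you supply a clean justification via the transitive action of $(\Zn)^{\ast}$ commuting with multiplication by $k$. Your route avoids any iteration or ``maximal level'' bookkeeping, and notably it does not invoke the tree-optimisation lemma of Section \ref{sec2} at all: the only property of the algorithm's output you use is the covering property that every $m$ with $\alpha_m>0$ either lies in $B$ or has a child in $B$ (which holds because nodes at maximal level are put in $B$ at initialisation and every other node is examined by the loop). The trade-off is that your argument certifies only that $B$ yields an optimal $k$-free set, whereas the paper's lemma additionally gives uniqueness of the maximising vertex set; for the proposition as stated this costs nothing. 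All the steps you flag as potential obstacles (equal fibres, $f_{m^\prime}\geqslant1$, $a_m\leqslant R_k(A_m)$ at roots since $A\cap A_m$ is a $k$-free subset of the $k$-stable class $A_m$) check out.
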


\begin{proof}
Assume that $C$ is a $k$-free set in $\Zn$ with $\left|C\right|>\left|\overline{B}\right|$. Let $x$ be an element in $C\backslash \overline{B}$ of maximal level $t$, $m$ the integer such that $x\in A_m$ and $T_i$ the rooted tree which contains $m$.

First case : $t=1$ and $m\notin B$. Thus, $m$ is a root but not in $B$, which means that there is a child $m^\prime$ of $m$ in $B$ (otherwise $\alpha_m=R_k(A_m)=0$ and $C$ could not be a $k$-free set). Then, the set $k^{-1}(x)=\left\{y \in A_{m^\prime}| y=kx\right\}$ has no element in $C$ but has size $$\left|k^{-1}(x)\right|=\frac{\varphi(n/m^\prime)}{\varphi(n/m)}>1$$ and by substituting $\left\{x\right\}$ by $k^{-1}(x)$, we get a $k$-free set (since $t$ is the maximal level of an element of $C\backslash\overline{B}$) of size strictly greater than $C$.

Second case : $t>1$. By the construction of $\overline{B}$, $m$ does not belong to $B$ and we can do as in the previous case.

The two cases lead to a contradiction, then all elements $x$ in $C\backslash\overline{B}$ satisfy $t=1$ and $m$ belongs to $B$. Thus, $m$ is a root and we can substitute $C\cap A_m$ by $K_m$ for each root, and we get $\left|C\right|\leqslant\left|\overline{B}\right|$.
We finally get the result by counting the size of $\overline{B}$.

\end{proof}

Thus, to get $R_k(n)$, if the prime factorization of $k$ and $n$ is known, we need to construct the graph ($O(\log(n))$ operations), to apply the algorithm ($O(\log(n))$), to compute $\alpha_m$ for $m$ in $B$ ($O((\log(n))^2)$ operations since we have the prime factorization of $m$) and finally add those values.

\section{Applications of Theorem \ref{algo}}\label{sec5}

Now, to illustrate the method in a particular case, we deal with the example mentioned in introduction, which is $n=3^3.5^4.7^2=826875$ and $k=3.5=15$. In this case, we get a forest with roots $3^3.5^4$, $3^3.5^4.7$ and $3^3.5^4.7^2$. We just represent below one of those trees. To get the second, we have to multiply each vertice by $7$, and for the third, by $7^2$. Applying the algorithm, we get :

\begin{center}
\begin{tikzpicture}
\node (b0) at (0,0) {$3^3.5^4$};
\node (b11) at (-3,-1) {$3^3.5^3$};
\node[rectangle,draw] (b12) at (1,-1) {$3^2.5^3$};
\node[rectangle,draw] (b13) at (4,-1) {$3^2.5^4$};
\node[rectangle,draw] (b111) at (-5,-2) {$3^2.5^2$};
\node[rectangle,draw] (b112) at (-2,-2) {$3^3.5^2$};
\node (b121) at (1,-2) {$3.5^2$};
\node (b131) at (3,-2) {$3.5^3$};
\node (b132) at (5,-2) {$3.5^4$};
\node (b1111) at (-6,-3) {$3.5$};
\node (b1121) at (-3,-3) {$3^2.5$};
\node (b1122) at (-1,-3) {$3^3.5$};
\node[rectangle,draw] (b1211) at (1,-3) {$5$};
\node[rectangle,draw] (b1311) at (3,-3) {$5^2$};
\node[rectangle,draw] (b1321) at (4,-3) {$5^3$};
\node[rectangle,draw] (b1322) at (5,-3) {$5^4$};
\node[rectangle,draw] (b11111) at (-6,-4) {$1$};
\node[rectangle,draw] (b11211) at (-4,-4) {$3$};
\node[rectangle,draw] (b11221) at (-2,-4) {$3^2$};
\node[rectangle,draw] (b11222) at (0,-4) {$3^3$};

\draw (b0) -- (b11);
\draw (b0) -- (b12);
\draw (b0) -- (b13);
\draw (b11) -- (b111);
\draw (b11) -- (b112);
\draw (b111) -- (b1111);
\draw (b1111) -- (b11111);
\draw (b112) -- (b1121);
\draw (b112) -- (b1122);
\draw (b1121) -- (b11211);
\draw (b1122) -- (b11221);
\draw (b1122) -- (b11222);
\draw (b12) -- (b121);
\draw (b121) -- (b1211);
\draw (b13) -- (b131);
\draw (b13) -- (b132);
\draw (b131) -- (b1311);
\draw (b132) -- (b1321);
\draw (b132) -- (b1322);

\end{tikzpicture}
\end{center}
To get the maximal size of a $15$-free set in $\mathbb{Z}/826875\mathbb{Z}$ we have to sum all $\varphi(n/m)$ for all $m$ choosen by the algorithm in each tree. And we get $R_{15}(826875)=775180$ as we deduced from Theorems \ref{km} and \ref{k2m}.

This way to compute $R_k(n)$ does not give a general formula, that is why we study in Theorem \ref{cas} theorem several cases, which we prove here.

\begin{proof}

\begin{enumerate}
\item The first graph below is the one we get in this particular case ($n=p^\alpha$, $k=up$ with $\gcd(u,p)=1$), then we apply the algorithm and we obtain a set of vertices, which are the one with a box around. We get the second graph when $\alpha$ is even and the third if $\alpha$ is odd :
\begin{center}
\begin{tikzpicture}
\node (b0) at (0,0) {$p^\alpha$};
\node (b1) at (0,-1) {$p^{\alpha-1}$};
\node (b2) at (0,-2) {$\vdots$};
\node (b3) at (0,-3) {$p^2$};
\node (b4) at (0,-4) {$p$};
\node (b5) at (0,-5) {$1$};

\draw (b0) -- (b1);
\draw (b1) -- (b2);
\draw (b2) -- (b3);
\draw (b3) -- (b4);
\draw (b4) -- (b5);

\node (c0) at (3,0) {$p^\alpha$};
\node (c1) at (3,-1) {$p^{\alpha-1}$};
\node (c2) at (3,-2) {$\vdots$};
\node[rectangle,draw] (c3) at (3,-3) {$p^2$};
\node (c4) at (3,-4) {$p$};
\node[rectangle,draw] (c5) at (3,-5) {$1$};

\node (legende2) at (3,-5.5) {$\alpha$ even};

\draw (c0) -- (c1);
\draw (c1) -- (c2);
\draw (c2) -- (c3);
\draw (c3) -- (c4);
\draw (c4) -- (c5);

\node (d0) at (6,0) {$p^\alpha$};
\node[rectangle,draw] (d1) at (6,-1) {$p^{\alpha-1}$};
\node (d2) at (6,-2) {$\vdots$};
\node[rectangle,draw] (d3) at (6,-3) {$p^2$};
\node (d4) at (6,-4) {$p$};
\node[rectangle,draw] (d5) at (6,-5) {$1$};
\node (legende3) at (6,-5.5) {$\alpha$ odd};

\draw (d0) -- (d1);
\draw (d1) -- (d2);
\draw (d2) -- (d3);
\draw (d3) -- (d4);
\draw (d4) -- (d5);

\end{tikzpicture}
\end{center}
Since $A_{p^\alpha}=\left\{0\right\}$, $R_{up}(p^\alpha)=0$, that is why $p^\alpha$ is never considered by the algorithm. By applying the Proposition \ref{compute}, we get the result.

\item We give below the results of the algorithm ($n=p^\alpha$, $k=up^2$ with $\gcd(u,p)=1$), which depends on the value of $\alpha$ modulo $4$ (notice that $R_{up^2}(p^\alpha)=0$) :

\begin{center}
\begin{tikzpicture}
\node (b0) at (0,0) {$p^\alpha$};
\node[rectangle,draw] (b11) at (-0.6,-1) {$p^{\alpha-1}$};
\node (b12) at (0.6,-1) {$p^{\alpha-2}$};
\node (b21) at (-0.6,-2) {$\vdots$};
\node (b22) at (0.6,-2) {$\vdots$};
\node[rectangle,draw] (b31) at (-0.6,-3) {$p^4$};
\node (b32) at (0.6,-3) {$p^3$};
\node (b41) at (-0.6,-4) {$p^2$};
\node[rectangle,draw] (b42) at (0.6,-4) {$p$};
\node[rectangle,draw] (b51) at (-0.6,-5) {$1$};
\node (legende1) at (0,-5.5) {$\alpha\equiv 1\Mod 4$};

\draw (b0) -- (b11);
\draw (b0) -- (b12);
\draw (b11) -- (b21);
\draw (b21) -- (b31);
\draw (b31) -- (b41);
\draw (b41) -- (b51);
\draw (b12) -- (b22);
\draw (b22) -- (b32);
\draw (b32) -- (b42);

\node (a0) at (2.8,0) {$p^\alpha$};
\node (a11) at (2.2,-1) {$p^{\alpha-1}$};
\node[rectangle,draw] (a12) at (3.6,-1) {$p^{\alpha-2}$};
\node (a21) at (2.2,-2) {$\vdots$};
\node (a22) at (3.6,-2) {$\vdots$};
\node[rectangle,draw] (a31) at (2.2,-3) {$p^4$};
\node (a32) at (3.6,-3) {$p^3$};
\node (a41) at (2.2,-4) {$p^2$};
\node[rectangle,draw] (a42) at (3.6,-4) {$p$};
\node[rectangle,draw] (a51) at (2.2,-5) {$1$};
\node (legende2) at (2.8,-5.5) {$\alpha\equiv 3\Mod 4$};

\draw (a0) -- (a11);
\draw (a0) -- (a12);
\draw (a11) -- (a21);
\draw (a21) -- (a31);
\draw (a31) -- (a41);
\draw (a41) -- (a51);
\draw (a12) -- (a22);
\draw (a22) -- (a32);
\draw (a32) -- (a42);

\node (c0) at (5.6,0) {$p^\alpha$};
\node[rectangle,draw] (c12) at (6.2,-1) {$p^{\alpha-1}$};
\node[rectangle,draw] (c11) at (5,-1) {$p^{\alpha-2}$};
\node (c21) at (5,-2) {$\vdots$};
\node (c22) at (6.2,-2) {$\vdots$};
\node (c31) at (5,-3) {$p^2$};
\node (c32) at (6.2,-3) {$p^3$};
\node[rectangle,draw] (c41) at (5,-4) {$1$};
\node[rectangle,draw] (c42) at (6.2,-4) {$p$};
\node (legende3) at (5.6,-5.5) {$\alpha\equiv 2\Mod 4$};

\draw (c0) -- (c11);
\draw (c0) -- (c12);
\draw (c11) -- (c21);
\draw (c21) -- (c31);
\draw (c31) -- (c41);
\draw (c12) -- (c22);
\draw (c22) -- (c32);
\draw (c32) -- (c42);

\node (d0) at (8.4,0) {$p^\alpha$};
\node (d12) at (9,-1) {$p^{\alpha-1}$};
\node (d11) at (7.8,-1) {$p^{\alpha-2}$};
\node (d21) at (7.8,-2) {$\vdots$};
\node (d22) at (9,-2) {$\vdots$};
\node (d31) at (7.8,-3) {$p^2$};
\node (d32) at (9,-3) {$p^3$};
\node[rectangle,draw] (d41) at (7.8,-4) {$1$};
\node[rectangle,draw] (d42) at (9,-4) {$p$};
\node (legende4) at (8.4,-5.5) {$\alpha\equiv 0\Mod 4$}; 

\draw (d0) -- (d11);
\draw (d0) -- (d12);
\draw (d11) -- (d21);
\draw (d21) -- (d31);
\draw (d31) -- (d41);
\draw (d12) -- (d22);
\draw (d22) -- (d32);
\draw (d32) -- (d42);

\end{tikzpicture}
\end{center}
And we compute $R_k(n)$ again thanks to Proposition \ref{compute}.

\item If $k=up$ and $n=p^\alpha q^\beta$ with $\gcd(u,p)=gcd(u,q)=1$, we get a forest of $\beta+1$ rooted trees $(T_j)_{j=0\cdots \beta}$ with $T_j$ :

\begin{center}
\begin{tikzpicture}
\node (b0) at (0,0) {$p^\alpha q^j$};
\node (b1) at (0,-1) {$p^{\alpha-1}q^j$};
\node (b2) at (0,-2) {$\vdots$};
\node (b3) at (0,-3) {$p^2q^j$};
\node (b4) at (0,-4) {$pq^j$};
\node (b5) at (0,-5) {$q^j$};

\draw (b0) -- (b1);
\draw (b1) -- (b2);
\draw (b2) -- (b3);
\draw (b3) -- (b4);
\draw (b4) -- (b5);
\end{tikzpicture}
\end{center}
Then, the algorithm gives, as in the first case, the size of an optimal $k$-free set in $T_j$:

$$\sum_{i=0}^{\left\lfloor \frac{\alpha-1}{2}\right\rfloor}\varphi(p^{\alpha-2i}q^{\beta-j}).$$
We just need to add the contribution of all $T_j$'s to get the result.

For the case $k=up^2$, this time, it is a consequence of the second case.

\end{enumerate}

\end{proof}

\section{Proof of Theorem \ref{th2}}\label{sec7}

Now, we want to study $k$-free sets in $\llbracket1,n\rrbracket$, and a good way is to consider the partition

$$\llbracket1,n\rrbracket=\bigsqcup_{i\not\equiv0\Mod k}\left(\mathcal{O}^k(i)\bigcap\llbracket1,n\rrbracket\right).$$
Indeed, to be a $k$-free set is equivalent to not have consecutive elements in such orbits (we abusively call orbit of $i$ the set $\mathcal{O}^k(i)\bigcap\llbracket1,n\rrbracket$). Let see now what being maximal for inclusion means in term of orbits. Actually, we can clearly assume that $A$ is a maximal $k$-free set (for inclusion) if and only if for each orbits of $i\not\equiv0\Mod k$, exactly one of the two first elements is in $A$, exactly one of the two last elements is in $A$, there is not consecutive elements, and for all three consecutives elements, there is at least one which is in $A$. That leads us to study the following combinatorial problem :

A set $E\subset\llbracket1,l\rrbracket$ satisfies $(\mathcal{P})$ if :
\begin{itemize}
\item $1 \in E \text{ or } 2\in E$.
\item $l-1\in E \text{ or } l\in E$.
\item $i\in E \Rightarrow (i-1)\notin E \text{ and } (i+1) \notin E$.
\item $\forall i \in \llbracket 2,l-1\rrbracket, \left\{i-1,i,i+1\right\}\cap E\neq \emptyset$.
\end{itemize}
We denote by $h(l)$ the minimal size of a set which satisfies $(\mathcal{P})$ in $\llbracket1,l\rrbracket$.

\begin{lem}\label{comb}
$$h(l)=\left\lceil \frac{l}{3} \right\rceil.$$
\end{lem}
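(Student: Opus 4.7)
The plan is to prove $h(l) = \lceil l/3 \rceil$ by matching upper and lower bounds: the first via an explicit construction, the second via a telescoping gap argument.

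For the upper bound, write $l = 3q + r$ with $r \in \{0, 1, 2\}$ and consider
\[
E = \begin{cases} \{2, 5, 8, \ldots, 3q-1\} & \text{if } r = 0, \\ \{2, 5, \ldots, 3q-1, 3q+1\} & \text{if } r = 1, \\ \{2, 5, \ldots, 3q-1, 3q+2\} & \text{if } r = 2. \end{cases}
\]
In each case $|E| = \lceil l/3 \rceil$, and $(\mathcal{P})$ is immediate: the boundary conditions hold since $2 \in E$ and the last element lies in $\{l-1, l\}$, while the two internal conditions follow because consecutive elements of $E$ differ by exactly $2$ or $3$, which excludes both pairs at distance $1$ and blocks of three consecutive absent values.

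For the lower bound, let $E$ satisfy $(\mathcal{P})$ with elements $e_1 < e_2 < \cdots < e_m$. The four clauses of $(\mathcal{P})$ translate respectively into $e_1 \leqslant 2$, $e_m \geqslant l - 1$, $e_i - e_{i-1} \geqslant 2$, and $e_i - e_{i-1} \leqslant 3$. The last of these is the crucial step: any gap of $4$ or more would leave the three integers $e_{i-1}+1,\,e_{i-1}+2,\,e_{i-1}+3$ all outside $E$ with $e_{i-1}+2 \in \llbracket 2, l-1 \rrbracket$, contradicting the fourth clause. Telescoping,
\[
l - 3 \;\leqslant\; e_m - e_1 \;=\; \sum_{i=2}^{m}(e_i - e_{i-1}) \;\leqslant\; 3(m-1),
\]
so $m \geqslant l/3$, and since $m$ is an integer, $m \geqslant \lceil l/3 \rceil$.

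There is no serious obstacle beyond bookkeeping; the main conceptual point is the observation that conditions three and four together force every gap to lie in $\{2, 3\}$. The only edge case needing a comment is $m = 1$, where the telescoping sum is empty but the boundary inequalities directly give $l - 1 \leqslant e_1 \leqslant 2$, hence $l \leqslant 3$, matching $\lceil l/3 \rceil = 1$ so that no separate small-$l$ argument is required.
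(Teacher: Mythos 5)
Your proof is correct. The upper-bound constructions are essentially the same as the paper's (the sets $\{2,5,\ldots\}$ and the $r=1$ variant coincide with, or are trivial modifications of, the paper's sets $B$), but your lower bound is organized differently: the paper partitions $\llbracket 1,l\rrbracket$ into disjoint blocks ($\{1,2\}$, triples $\{3i,3i+1,3i+2\}$, and possibly a final pair), argues each block must meet $E$, and runs this separately for each residue of $l$ modulo $3$; you instead sort the elements of $E$, show every consecutive gap is at most $3$ (using the fourth clause exactly as the paper implicitly does) and telescope $e_m-e_1\geqslant l-3$ against $3(m-1)$. Your version buys a uniform treatment of all three residues and dispenses with the case analysis at the cost of one edge case ($m=1$), which you handle correctly via the boundary clauses; the paper's block partition is perhaps more transparent about \emph{which} three consecutive integers are being invoked, but is otherwise equivalent. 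One small point worth making explicit in your write-up: the verification that $e_{i-1}+2\in\llbracket 2,l-1\rrbracket$ uses $e_{i-1}\leqslant e_i-4\leqslant l-4$, which you state but do not justify; it is immediate, so this is not a gap.
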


\begin{proof}

First case : $l=3u$. $B=\left\{2, 5, \cdots, 2+3(u-1)\right\}$ satisfies $(\mathcal{P})$ and has a size $u=l/3$. Since we have to take one element among $\left\{3i+1,3i+2,3i+3\right\}, \forall i \in \llbracket0,u-1\rrbracket$, $h(l)\geqslant u$. Then, $h(3u)=u$.

Second case : $l=3u-1$. We consider the following partition : $$\llbracket1,3u-1\rrbracket=\left\{1,2\right\}\bigcup\left(\bigcup_{i\in\llbracket1,u-1\rrbracket}\left\{3i,3i+1,3i+2\right\}\right).$$
Since we must have at least one element from each subsets , we have $h(3u-1)\geqslant u$. But $B=\left\{2, 5, \cdots, 2+3(u-1)\right\}$ has still the good size. Then $h(3u-1)=u$.

Third case : $l=3u-2$, We consider the following partition :
$$\llbracket1,3u-2\rrbracket=\left\{1,2\right\}\bigcup\left(\bigcup_{i\in\llbracket1,u-2\rrbracket}\left\{3i,3i+1,3i+2\right\}\right)\bigcup\left\{3u-3,3u-2\right\}.$$
Since we must have at least one element from each subsets, we have $h(3u-2)\geqslant u$. But $B=\left\{1, 4, \cdots, 1+3(u-1)\right\}$ satisfies $(\mathcal{P})$. Then $h(3u-2)=u$.

\end{proof}

We are now able to prove the Theorem \ref{th2}.

\begin{proof}

If we denote $A_i:=\rrbracket\frac{n}{k^{i+1}},\frac{n}{k^{i}}\rrbracket$, we have $$\llbracket1,n\rrbracket=\bigcup_{i=1}^d A_i$$ where $d=\left[\log_k(n)\right]$. Moreover, $$\left|A_i\right|=\frac{n}{k^{i}}-\frac{n}{k^{i+1}}+\alpha(i)$$ with $\left|\alpha(i)\right|\leqslant1$. And the numbers of $j\not\equiv0\Mod k$ in $A_i$ is $$\left(1-\frac{1}{k}\right)\left(\frac{n}{k^{i}}-\frac{n}{k^{i+1}}+\alpha(i)\right)+\epsilon(i)$$ with $\left|\epsilon(i)\right|\leqslant 1$.
Each element in $A_i$ has an orbit of size $i+1$, then we deduce from the Lemma \ref{comb}:

$$\begin{aligned}\tilde{R_k}(n)&=\sum_{i=0}^d\left\lceil \frac{i+1}{3}\right\rceil\left(\left(1-\frac{1}{k}\right)\left(\frac{n}{k^{i}}-\frac{n}{k^{i+1}}+\alpha(i)\right)+\epsilon(i)\right)\\
&=\sum_{i=0}^d\left\lceil\frac{i+1}{3}\right\rceil\left(1-\frac{1}{k}\right)\left(\frac{n}{k^{i}}-\frac{n}{k^{i+1}}\right)+O(\log_k^{2}(n)).
\end{aligned}$$
To study this sum, we group together by three the terms with same integer part, in order to get a telescopic behaviour. Thus, we get :

$$\begin{aligned}\tilde{R_k}(n)&=\left(1-\frac{1}{k}\right)\sum_{i=0}^{\left\lfloor \frac{d}{3}\right\rfloor}\left(i+1\right)\left(\frac{n}{k^{3i}}-\frac{n}{k^{3i+1}}+\frac{n}{k^{3i+1}}-\frac{n}{k^{3i+2}}+\frac{n}{k^{3i+2}}-\frac{n}{k^{3i+3}}\right)\\
&+\beta(n)+O(\log_k^{2}(n))\\
&=\left(1-\frac{1}{k}\right)\sum_{i=0}^{\left\lfloor \frac{d}{3}\right\rfloor}\left(i+1\right)\left(\frac{n}{k^{3i}}-\frac{n}{k^{3i+3}}\right)+\beta(n)+O(\log_k^{2}(n))
\end{aligned}
$$
with

$$\beta(n)\leqslant\left(1-\frac{1}{k}\right)\times2\left(\frac{d}{3}+1\right)\left(\frac{n}{k^{d-1}}-\frac{n}{k^{d+1}}\right)=O(\log_k(n)).$$

Finally, $$\begin{aligned}\tilde{R_k}(n)&=\left(1-\frac{1}{k}\right)\sum_{i=0}^{\left\lfloor \frac{d}{3}\right\rfloor}\frac{n}{k^{3i}}+O(\log_k^2(n))\\
&=\frac{k^2}{k^2+k+1}n+O(\log_k^2(n)).
\end{aligned}$$

\end{proof}

\section*{Acknowledgements}

This work is part of the PhD of the author. He wishes to thank his advisor Alain Plagne for his support during the preparation of this paper. We also thank Jeremy Le Borgne for his help and careful reading. The author was supported  by an ANR grant Cæsar, number ANR 12 - BS01 - 0011.

\end{document}